\newtheorem{theorem}{Theorem}
\newtheorem{corollary}{Corollary}
\newtheorem{lemma}{Lemma}
\newtheorem{definition}{Definition}
\newtheorem{remark}{Remark}
\newtheorem{proposition}{Proposition}
\newtheorem{example}{Example}
\newtheorem{opq}{Open Question}
\newtheorem{assumption}{Assumption}
\newcommand{\rd}{{\mathbb R^d}}
\newcommand{\re}{{\mathbb R}}
\newcommand{\rhoprime}{{{}stabilization radius}}
\newcommand{\n}{{\mathbb N}}
\newcommand{\robar}{{\tilde \rho}}
\newcommand{\cM}{{\cal{M}}}
\newcommand{\cm}{{\cal{M}}}
\newcommand{\inter}{{{\rm int} }}
\newcommand{\setmat}{{\cal{M}}}
\newcommand{\z}{{\mathbb{Z}}}
\newcommand{\cN}{{\mathbb{N}}}
\newcommand{\vectun}{{e }}
\newcommand{\pao}[1]{{\color{black}#1}}
\newcommand{\colp}[1]{{\color{black}#1}}
\title{\LARGE{\bf On feedback stabilization of linear switched systems via switching signal control}}
\author{Rapha\"el M. Jungers\footnote{R.~Jungers  is with the ICTEAM Institute, Universit\'e catholique de Louvain, 
    4 avenue Georges Lemaitre, B-1348 Louvain-la-Neuve, Belgium. R. J. is an F.R.S.-FNRS research associate. His work is supported by the Belgian Network DYSCO, funded by the Belgian government and the Concerted Research Action (ARC) of the French Community
of Belgium. {\tt\small raphael.jungers@uclouvain.be}}
, Paolo Mason\footnote{P.~Mason is with Laboratoire des Signaux et Syst\`emes (L2S, UMR 8506), CNRS -
CentraleSup\'elec - Universit\'e Paris-Sud, 3, rue Joliot Curie,
91192, Gif-sur-Yvette, France. His work is supported by  the iCODE
institute, research project of the Idex Paris-Saclay. {\tt\small paolo.mason@l2s.centralesupelec.fr}}}%
\begin{document}

\maketitle

\begin{abstract}
Motivated by recent applications in control theory, we study the feedback stabilizability of switched systems, where one is allowed to chose the switching signal as a function of $x(t)$ in order to stabilize the system.  We propose new algorithms
and analyze several mathematical features of the problem which were unnoticed up to now, to our knowledge.  We prove complexity results, \colp{(in-)equivalence between various notions of stabilizability,} existence of Lyapunov functions, and provide a case study for \colp{a paradigmatic example introduced by Stanford and Urbano}.
\end{abstract}

\section{Introduction} \label{sec-intro}

Switched systems are a paradigmatic family of complex systems, which has sparked many interesting research efforts in the last decades.  They appear naturally in many engineering situations, or as abstractions of more complicated systems.  Also, the mathematical structure of the challenges they offer turns out to appear in other fields of mathematics, engineering, or computer science, not only in control theory.  Already the simplest questions which one might ask on a dynamical system turn out to be extremely challenging in this framework.\\
 Let us mention one question: {Is it possible to design a switching signal giving rise to an asymptotically stable behaviour, i.e. such that for any starting point the corresponding trajectory converges to zero?}  It is well known that the answer to this question for discrete time linear switched systems depends on the so-called \emph{joint spectral subradius}, and this quantity is known to be Turing-uncomputable \pao{\cite{blondel-mortal},\cite[Section 2.2.4]{jungers_lncis}}.  Other works studied more advanced problems in this `open loop' framework.  See for instance \cite{lincoln2002lqr,lee2009infinite} for works on the LQR problem.\\
In this paper, we study a similar question, though bearing some differences: \colp{Under which conditions there exists} 
a switching sequence that drives the point towards the origin \emph{if one is allowed to observe the system at every time, and choose the switching signal accordingly}?  
Thus, if the joint spectral subradius characterizes the open loop stabilizability question \colp{assuming the switching signal independent of the initial state}, the quantity we focus  \colp{on here is instead related} 
to the feedback stabilization problem: suppose that one can observe at every time $t$ the value of $x(t),$ and is allowed to control the system \colp{based on this sole information; }
 is there a strategy allowing to stabilize the system, for any starting point?  \\This question appears naturally in several applications, and has motivated the interest of control theorists in recent years.  However, to our knowledge, if several techniques have been proposed, which allow in some cases to obtain sufficient conditions for stabilizability, it has not received a thorough mathematical analysis, like some other control theoretic questions on switched systems (see, e.g., \cite{jungers_lncis,liberzon-switching}).  

{The question is of application in problems of considerable importance nowadays.  See for instance \cite{lee-dul2011} for a study of this problem motivated by supervisory control and measurement scheduling.  Another application is the problem of optimizing drug treatments of some viral pathologies, like HIV: it has been recently proposed to model the medication process of HIV by a switched system \cite{hmcb10}, and the question of stabilizing it through the use of different modes comes naturally into the picture. Other important applications are in bisimulation of hybrid systems \cite{girard2010approximately}, where the switching system is an abstraction of a more complex one.  See \cite{buisson2005stabilisation,jadbabaie2003coordination,liberzon1999basic,bolzern2015h} for more works and applications around the stabilization of switching systems.}

Before to survey the previous results on the topic, let us formalize our problem: Given a set of matrices $\mathcal{M}\subset\mathbb{R}^{d\times d},$ we analyze linear dynamical systems where the matrix can switch within the set $\mathcal{M}.$ 
We denote the solution starting from $x_0\in\mathbb{R}^d$ and associated with a switching sequence $\sigma:\mathbb{T}\to \cM$ as $x_{\sigma,x_0}(t)$, where $\mathbb{T}=\mathbb{N}$ or $\mathbb{T}=\mathbb{R}_+$ represents the (discrete or continuous) time. \\In the first case $x_{\sigma,x_0}(t)=A_{\sigma(t)}\dots A_{\sigma(1)}x_0$ is the solution of the difference equation \[x(s+1)=A_{\sigma(s)}x(s), \quad x(0)=x_0,\] 
while in the second case $x_{\sigma,x_0}(\cdot)=\Phi_{\sigma}(s)x_0$ is the solution of \[\dot x(s)=A_{\sigma(s)}x(s), \quad x(0)=x_0,\] 
and $\Phi_{\sigma}(s)$ is the associated fundamental matrix. \rmjjj{\pao{Notice that in the expressions above,} 
by a slight abuse of notation, we  
\pao{have denoted} by $A_{\sigma(t)}$ the matrix from $\cM$ given by the signal $\sigma$ at time $t.$} \pao{This notation will be employed again in the following.}

Several different controllability or stabilization problems can be posed on switched systems\rmjjj{. \pao{In the presence of additional control variables, an} 
interesting problem concerns the feedback stabilization under arbitrary switching.}
In another family of problems, one does not consider control variables, but tries to stabilize the system via the choice of the switching signal. Of course these two approaches can be mingled, and one can seek for a controller, together with a switching strategy, so that the \emph{joint action of the linear controller, and the controller-ruled switching signal} stabilizes the system (see \cite{bolzern2015h,zhang2009exponential,lee2009infinite}).  Our goal is to understand from a theoretical point of view these different features in a separate analysis, and, as said above, we focus here on the second question.  Thus, even though very interesting pieces of work exist on the (linear) controller design problem for switched systems (see, e.g., \cite{blanchini,leed06,liberzon-switching,lin-antsaklis}), we focus here on works interested in the second problem.  

Feedback stabilization for continuous-time systems has attracted a considerable attention in the (non-linear) control community since many decades.
In particular let us mention that many noteworthy results have been obtained starting from the 80s, dealing with necessary and/or sufficient conditions for asymptotic controllability and stabilizability as well as existence and properties of control-Lyapunov functions, see for instance~\cite{ancona1999patchy,artstein,brockett,clarke,freeman,sontag2,sontag}. In many cases these results apply to the switched systems setting. On the other hand there exist only few works focusing on stabilizability for the specific case of (linear) continuous-time switched systems. In this context let us mention that for two dimensional continuous-time switched systems the stabilizability property can be characterized in a quite explicit way (see e.g. \cite{cong}), namely reducing the stabilizability property to the verification of a few simple algebraic inequalities similarly to what was done for the uniform stability issue (see e.g \cite{balde,boscain}).

Concerning discrete-time switched systems, to our knowledge the first work mentioning the advantage of making use of feedback in order to design a stabilizing switching signal is \cite{stanford}, where situations like the one described in Example \ref{ex-urbano} below are presented.  Subsequent papers have provided algorithmic ways of constructing such a switching rule: In \cite{zhai2001quadratic,zhai2003quadratic}, a first sufficient LMI (Linear Matrix Inequalities) condition is given.  This condition ensures the existence of an ellipsoid, which can be left invariant under a proper state feedback switching rule.  Characterizations of the existence of such an invariant ellipsoid (often termed as \emph{quadratic stability}), are given in \cite{skafidas1999stability,wicks1998switched}.  In \cite{geromel2006stability}, a more general condition is given, which allows for more complex invariant sets, described by piecewise quadratic functions.  This condition (under its most general form) is a BMI (Bilinear Matrix Inequality), but relaxations of it are given, which can be more easily checked, and still are more general than the above mentioned condition.  Other conditions can be found in \cite{fiacchini-jungers,geromel2006stability,pettersson2003synthesis}.  Interesting improvements on the BMI approach have been obtained recently \cite{fiacchini-girard-jungers}. Among other results, the authors provide a geometric interpretation and generalize the approach in order to decrease conservatism.

A further improvement has been obtained in \cite{zhang2009exponential}, where it is shown how to iterate such conditions in order to decrease their conservativeness, at the cost of increasing the computation time (this paper actually tackles both the control via the switching signal and via control inputs).  Asymptotically, these conditions become necessary; that is, if the system is stabilizable via control of the switching signal, there is a condition in the hierarchy that will be satisfied.  See \cite{lee-dul2011} for similar results with a slightly different notion of stabilizability.  As pointed out by the authors, in practice, for reasonably large systems, the conditions could become too computationally expensive before to reach one that is satisfied.  This is not surprising in view of the difficulty of the problem, which we formally prove in Section \ref{sec-algo}.  

In fact, already for very small dimensional systems, the problem seems very hard to tackle.  In order to support this claim, we study in details such a system in Section~\ref{stanford}.
\colp{For this system, based on the theoretical results obtained in the preceding sections, w}e propose a few approaches in order to approximate \colp{the best convergence rate that one can ensure.  Then we present a technical result providing an obstruction to the computation of the optimal achievable rate of decay, which we leave as an open problem.
} 

{\bf Outline of the paper.}  In Section \ref{sec-properties}, we adopt a systematic approach and provide careful definitions and elementary mathematical properties for our problem \colp{in the discrete-time setting}. In Section \ref{sec-control}, we make use of two tools in the control theory literature that can be put to good use in our situation: control-Lyapunov functions, and the joint spectral subradius. In Section \ref{sec-algo}, we analyze the algorithmic side of the problem.  Unsurprisingly, many negative results can be derived: we show that the general stabilizability problem is undecidable, and that it is NP-hard, even for nonnegative matrices.  We provide two algorithms that respectively deliver a lower and an upper bound on the required quantity (to our knowledge this is the first systematic lower bound in the literature). 
In Section~\ref{stanford} we
revisit an old example from Stanford and Urbano, and propose it as a paradigmatic example of the complexity of the problem.  We provide a detailed (numerical and theoretical) analysis of this problem. Still, a complete understanding of the stabilization properties of the system is missing.  \colp{In Section~\ref{s-cont} we derive the continuous-time counterparts of the results obtained in the preceding sections.}
We finally raise two interesting (in our opinon) open questions that we have not been able to answer.

\begin{remark}
As said above, our paper aims at showing several of the many subtleties underlying the stabilizability problem for switched systems.  As such, it contains results that are sometimes not connected to each other.  Also, as we will see, many different modeling details (discrete or continuous time, control signal defined a priori or depending on the state,...) give rise to different notions. A thorough analysis of all the properties for all the possible models would go out of the scope of this paper.  Rather, we have selected several interesting (in our view) results that give a flavor of the richness of the topic.  \\
Also, in this paper, we will make the following standing assumption.

\begin{assumption}\label{assum}
$\mathcal{M}$ is a compact set of matrices. If $\mathbb{T}=\mathbb{R}_+$ we additionally assume that $\mathcal{M}$ is convex.
\end{assumption}
\end{remark}

\section{Definitions and elementary properties}\label{sec-properties}
In this section we formalize some basic notions of stabilizability we will deal with and some related elementary properties. For the sake of clarity, starting from this section we  focus on the discrete-time setting (that is, $\mathbb{T}=\mathbb{N}$), \colp{and we assume, according to Assumption~\ref{assum}, that $\cM$ is a compact set of matrices. Nevertheless,} many of the definitions and results below may be straightforwardly adapted to the continuous-time case. The latter case will be discussed at the end of this paper, 
in Section~\ref{s-cont}.

In the definition below we introduce two natural notions of stabilizability related to the use of open loop  and closed loop switching laws, respectively.
\begin{definition}\label{def-pointwise}
We say that $\mathcal{M}$ is \emph{pointwise stabilizable} if for any $x_0\in \mathbb{R}^d$ there exists a switching law $\sigma_{x_0}(\cdot)$ such that $\lim_{t\to\infty} x_{\sigma_{x_0},x_0}(t)=0$.\\
We say that $\mathcal{M}$ is \emph{feedback stabilizable} if there exists a $0$-homogeneous\footnote{A function $f:\ \rd\rightarrow \re$ is \emph{$0$-homogeneous} if $f(\lambda x)=f(x)\quad \forall \lambda\neq 0,\ \forall x.$ } function $\sigma: \ \re^d\rightarrow \cM$ such that, for any $x_0\in\re^d$, $\lim_{t\to\infty} x(t)=0$, where $x(\cdot)$ is a solution of the corresponding closed loop system starting at $x_0$. In addition we say that $\mathcal{M}$ is uniformly feedback stabilizable if there exists $\sigma(\cdot)$ as above such that for any positive $\epsilon_1,\epsilon_2$ there exists $T\in \cN$  such that if $|x_0|<\epsilon_1$ the corresponding trajectory $x(\cdot)$ satisfies $|x(t)|<\epsilon_2$  if $t\geq T$.
\end{definition}

The difference between the definitions of pointwise and feedback stabilizability
is subtle. In the former pointwise stabilizability, one has to decide the switching signal once and for all at time zero. For general control systems, the 
notion of pointwise stabilizability is essentially known as asymptotic controllability (see e.g.~\cite{sontag-book}). In the latter feedback stabilizability, we assume that the controller has the knowledge of the state $x(t)$ at every instant, and the controller has to decide its actuation based on this sole knowledge. 


In order to tackle the more general situation where one is interested in the minimal achievable rate of growth of the system, we now introduce numerical values that quantify the notion of stabilizability.
\begin{definition}
We introduce the following stabilizability indices:
\begin{itemize}
\item For a given $x\in\mathbb{R}^d$ let us consider the quantity
\[\tilde{\rho}_x(\cM)=\inf\{\lambda\geq 0\ \bigl| \  \exists \sigma(\cdot)\mbox{ switching sequence, }M>0\ \mbox{s.t. } |x_{\sigma,x}(t)|\leq M \lambda^t |x|, \forall t\geq 0\}
\]
and define the \emph{pointwise stabilization radius} of the switched system as
\[\tilde{\rho}(\cM)=\sup_{x\in\mathbb{R}^d} \tilde{\rho}_x(\cM).\]
\item In the previous definition the constant $M$ may depend on the initial condition. This is no more the case if we consider the following definition:
\[\tilde{\rho}'(\cM)=\inf\left\{\lambda\geq 0 \ \Bigl| \ 
\begin{array}{c}
\exists M>0 \mbox{ s.t. } |x_{\sigma,x}(t)|\leq M \lambda^t |x| \mbox{ for any }x\in\mathbb{R}^d,\\
t\geq 0\mbox{ and some switching signal }\sigma \mbox{ depending on }x
\end{array}
\right\}.\] 
\end{itemize}
\label{ref-def}
\end{definition}
A closely related quantity, that has received more attention in the literature, is the \emph{joint spectral subradius} 
\[\check{\rho}(\cM):=\lim_{t\to \infty} \inf_{\sigma(\cdot)} \|A_{\sigma(t)}\dots A_{\sigma(0)}\|^{1/t}.\]  In terms of control theory, this latter quantity represents the 
{open loop stabilizability} of a switched system by a switching law which is independent on the initial point, unlike the ones involved in the computation of $\tilde \rho(\cM)$.
See \cite{jungers_lncis} for more on the joint spectral subradius.
To the best of our knowledge, Stanford and Urbano \cite{stanford} are the first ones to observe that $\check \rho(\cM) \neq \tilde \rho(\cM).$ They provide an example for which the two quantities are different, which we reproduce in a slightly different form here below. See \cite{bochi2015continuity} for a recent study of a very similar example (though not concerned with feedback stabilizability).

\begin{example}\label{ex-urbano} 
Let us consider $\mathcal{M}=\{A_1,A_2\}$, where
\[A_1=\left(\begin{array}{cc}\cos\frac{\pi}4 & \sin\frac{\pi}4 \\ -\sin\frac{\pi}4 & \cos\frac{\pi}4 \end{array}\right)=\frac{\sqrt{2}}{2}\left(\begin{array}{cc}1&1\\-1&1\end{array}\right), \quad A_2=\left(\begin{array}{cc} \frac12 & 0 \\ 0 & 2  \end{array}\right).\]
In this case it is easy to see that $\check{\rho}(\cM)=1.$ Indeed, $\det (A_{\sigma(t)}\dots A_{\sigma(0)})=1$ independently on the switching sequence, which implies that $\|A_{\sigma(t)}\dots A_{\sigma(0)}\|\geq 1$ and thus $\check{\rho}(\cM)\geq 1.$ On the other hand we have $\check{\rho}(\cM)\leq \|A_1\|=1$ by submultiplicativity, and thus  $\check{\rho}(\cM)=1.$\\
If one is allowed to tune the switching sequence depending on the value of $x(t),$ the situation is different: indeed, for any $x\in \mathbb{R}^2$ there always exists a natural number $n_x\leq 3$ such that the absolute value of the angle formed by the vector $A_1^{n_x}x$ and the $x_1$ axis  is smaller or equal than $\pi/8$. As a consequence it is easy to obtain the estimate $|A_2A_1^{n_x}x|<0.9 |x|$ and thus $\tilde\rho(\cM)<0.9^{1/4}\sim 0.974$.
For similar examples and a more detailed exposition, see e.g.~\cite{stanford}.
\end{example}
In the next lemma, we state properties that we will need in the rest of the paper. 
\begin{lemma}
\label{lem-basic}
The pointwise stabilization radius satisfies  the following basic properties:
\begin{itemize}
\item[(i)] Homogeneity: For any compact set of matrices $\cM,$ $\forall \gamma >0,$ $\tilde \rho (\gamma \cM) = \gamma \tilde \rho (\cM)$,
\item[(ii)] For any compact set of matrices $\cM,$ $\forall t \in \n,$ $\tilde \rho (\cM^t) = \tilde \rho (\cM)^t.$
\end{itemize}
The same properties hold for $\tilde \rho'$.
\end{lemma}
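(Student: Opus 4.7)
For property (i), my plan is to work directly from the definition of $\tilde\rho_x$. The key observation is that a switching signal $\sigma$ applied to the scaled set $\gamma \cM$ produces trajectories $y_{\sigma,x}(t) = \gamma^t \, x_{\sigma,x}(t)$ for the very same signal applied to $\cM$, because scaling each matrix by $\gamma$ scales each product of length $t$ by $\gamma^t$. Consequently, the bound $|y_{\sigma,x}(t)| \le M (\gamma\lambda)^t |x|$ holds in the scaled system if and only if the bound $|x_{\sigma,x}(t)| \le M \lambda^t |x|$ holds in the original one. Hence the infimum defining $\tilde\rho_x(\gamma\cM)$ is exactly $\gamma$ times the one defining $\tilde\rho_x(\cM)$, and passing to the supremum over $x$ yields $\tilde\rho(\gamma\cM) = \gamma\, \tilde\rho(\cM)$. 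The same bijection between signals carries the uniform constant $M$ from the definition of $\tilde\rho'$ through unchanged, so the identical argument works for $\tilde\rho'$.

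For property (ii), I plan to prove the two inequalities separately. For $\tilde\rho(\cM^t) \le \tilde\rho(\cM)^t$, I note that if $\sigma$ is a switching signal for $\cM$ achieving $|x_{\sigma,x}(s)| \le M\lambda^s |x|$, then grouping its values into blocks of length $t$ yields a switching signal $\tau$ for $\cM^t$ such that $y_{\tau,x}(k) = x_{\sigma,x}(tk)$, and the bound becomes $|y_{\tau,x}(k)| \le M (\lambda^t)^k |x|$. This shows $\tilde\rho_x(\cM^t) \le \tilde\rho_x(\cM)^t$ for every $x$, hence the desired inequality.

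For the reverse inequality $\tilde\rho(\cM^t) \ge \tilde\rho(\cM)^t$, I would start from a signal $\tau$ for $\cM^t$ satisfying $|y_{\tau,x}(k)| \le M \mu^k |x|$, and ``unroll'' each matrix $B_{\tau(k)} \in \cM^t$ into the product of $t$ matrices of $\cM$ it comes from; this produces a signal $\sigma$ for $\cM$ with $x_{\sigma,x}(tk) = y_{\tau,x}(k)$. To bound $|x_{\sigma,x}(s)|$ for $s$ in between two multiples of $t$, I would use compactness of $\cM$ to extract a uniform constant $C = \max\{1,\|A\|\}^{t}$ with $A$ ranging over $\cM$, which controls partial products of length less than $t$. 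Writing $s = tk + r$ with $0 \le r < t$ gives $|x_{\sigma,x}(s)| \le C M \mu^k |x|$, and comparing $\mu^k$ with $(\mu^{1/t})^s$ leads, after an elementary estimate distinguishing $\mu \ge 1$ and $\mu < 1$, to $|x_{\sigma,x}(s)| \le C' M (\mu^{1/t})^s |x|$ for an absolute constant $C'$ depending only on $t$ and $\mu$. This yields $\tilde\rho_x(\cM) \le \mu^{1/t}$, hence the claim.

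The step I expect to require the most care is this last comparison between the continuous time index $s$ and the block index $k = \lfloor s/t \rfloor$, because the sign of $\log \mu$ determines which side gains the extra factor; I will need to absorb the resulting discrepancy into a new multiplicative constant, which is precisely why the $M$ in the definition of $\tilde\rho_x$ is allowed to depend on $x$ and also on the signal. Crucially, however, when adapting the argument to $\tilde\rho'$, this constant enlargement is uniform in $x$ (it depends only on $\cM$, $t$, and $\mu$), so the uniform bound in the definition of $\tilde\rho'$ is preserved. This is what allows the same statement to hold for $\tilde\rho'$ without modification.
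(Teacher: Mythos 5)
Your proposal is correct and follows essentially the same route as the paper, which simply invokes a bijection between trajectories of the systems defined by $\cM$ and by $\gamma\cM$ (resp. $\cM^t$) under which $\lambda$ becomes $\gamma\lambda$ (resp. $\lambda^t$). Your write-up merely fills in the details the paper leaves implicit, notably the compactness bound on partial products needed to control intermediate times in the reverse inequality of (ii) and the observation that the resulting constant enlargement is uniform in $x$, so the argument transfers to $\tilde\rho'$.
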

\begin{proof}
It is easy to see that there is a bijection between trajectories of the system defined by $\cM$ and the one defined by $\gamma \cM$ (resp. $\cM^t$) such that the number $\lambda$ in the definition of $\tilde\rho$  and $\tilde\rho'$ becomes $\gamma \lambda$ (resp.  $\lambda ^t$). 
The thesis follows immediately.
\end{proof}

We now establish the equivalence of the two quantities introduced in Definition~\ref{ref-def} and we characterize  the notion of pointwise stabilizability in terms of them. 
The following proposition may be easily obtained by adapting the proof of \colp{the similar result~\cite[Theorem~3.9]{sun2006switched}.}
\begin{proposition}
\label{equiv}
The following conditions are equivalent:
\begin{itemize}
\item[(i)] $\tilde{\rho}'(\cM)<1$,
\item[(ii)] $\tilde{\rho}(\cM)<1$,
\item[(iii)] $\mathcal{M}$ is pointwise stabilizable.
\end{itemize}
\end{proposition}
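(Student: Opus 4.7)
The plan is to establish the cyclic chain $(i)\Rightarrow(ii)\Rightarrow(iii)\Rightarrow(i)$. The first two implications follow fairly directly from the definitions; the real content lies in $(iii)\Rightarrow(i)$, where a compactness argument is needed in order to upgrade the pointwise asymptotic behavior guaranteed by pointwise stabilizability into uniform exponential decay.

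For $(i)\Rightarrow(ii)$ I would simply note that any bound valid with a uniform constant $M$ (independent of $x$) is \emph{a fortiori} valid with an $x$-dependent constant, so $\tilde\rho(\cM)\leq \tilde\rho'(\cM)<1$. For $(ii)\Rightarrow(iii)$, since $\tilde\rho_x(\cM)\leq \tilde\rho(\cM)<1$ for every $x$, the definition of $\tilde\rho_x$ as an infimum provides for each $x$ a rate $\lambda_x<1$, a constant $M_x>0$ and a switching sequence $\sigma_x$ with $|x_{\sigma_x,x}(t)|\leq M_x\lambda_x^t|x|$, so in particular $x_{\sigma_x,x}(t)\to 0$.

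For $(iii)\Rightarrow(i)$, the strategy is as follows. For every $x\in S^{d-1}$, pointwise stabilizability yields a switching signal $\sigma_x$ and a time $T_x\in\mathbb{N}$ such that $|x_{\sigma_x,x}(T_x)|<1/2$. Since the map $y\mapsto x_{\sigma_x,y}(T_x)$ is a finite product of matrices of $\cM$ acting on $y$, it is continuous in $y$, so this strict inequality persists on an open neighborhood $U_x\subset S^{d-1}$ of $x$. Compactness of $S^{d-1}$ then extracts a finite subcover $U_{x_1},\dots,U_{x_N}$ with associated signals $\sigma_i$ and times $T_i$. I would set $T^*=\max_i T_i$ and
\[C=\max_{1\leq i\leq N}\,\sup\bigl\{|x_{\sigma_i,y}(t)|\ :\ y\in \overline{U_{x_i}},\ 0\leq t\leq T_i\bigr\},\]
which is finite by continuity and compactness. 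Given $|x_0|=1$, I construct the stabilizing switching sequence inductively: at the end of the $k$-th block the state $x(\tau_k)$ has norm at most $2^{-k}$, and I apply the signal $\sigma_i$ corresponding to the index with $x(\tau_k)/|x(\tau_k)|\in U_{x_i}$ for the next $T_i\leq T^*$ steps. Linearity yields $|x(\tau_{k+1})|\leq 2^{-(k+1)}$ and $|x(t)|\leq C\cdot 2^{-k}$ for $\tau_k\leq t\leq \tau_{k+1}$. Since $\tau_k\leq kT^*$, at time $t$ one has completed at least $\lfloor t/T^*\rfloor$ blocks, so $|x(t)|\leq 2C\,(2^{-1/T^*})^t$, and homogeneity extends this to arbitrary $x_0$. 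This gives $\tilde\rho'(\cM)\leq 2^{-1/T^*}<1$ and closes the cycle.

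The main obstacle I anticipate is precisely in this last step: the pointwise definition allows the switching signal, the decay rate and the time-to-halve to vary wildly from one initial condition to another, and one must rule out such non-uniformity. The key lever is the linearity of the dynamics, which enables both the reduction to the compact unit sphere and the re-use of a single signal over a whole neighborhood of each base point; compactness of $S^{d-1}$ then makes all parameters uniform.
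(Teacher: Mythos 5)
Your proof is correct. The paper itself does not spell out an argument for this proposition---it only remarks that it ``may be easily obtained by adapting the proof of \cite[Theorem~3.9]{sun2006switched}''---so you have supplied in full the standard compactness-and-concatenation argument that the authors are implicitly invoking. The two easy implications are handled exactly as one would expect, and your treatment of $(iii)\Rightarrow(i)$ is the right one: linearity reduces everything to the unit sphere, continuity of the finite product $y\mapsto A_{\sigma_x(T_x)}\cdots A_{\sigma_x(1)}y$ lets a single signal halve the norm on a whole neighborhood, compactness of $S^{d-1}$ yields a finite family of signals with a common bound $T^*$ on the halving times and a common excursion constant $C$, and the block-concatenation gives the uniform estimate $|x(t)|\leq 2C\,(2^{-1/T^*})^t|x_0|$, hence $\tilde\rho'(\cM)\leq 2^{-1/T^*}<1$.

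Two trivial points worth a half-sentence each in a polished write-up: if some $x(\tau_k)=0$ the normalization $x(\tau_k)/|x(\tau_k)|$ is undefined, but then the trajectory is identically zero from that time on and the bound holds vacuously; and in $(ii)\Rightarrow(iii)$ one should say explicitly that $\tilde\rho_x(\cM)<1$ being an infimum of admissible rates guarantees the existence of an \emph{admissible} $\lambda_x<1$ (with its witnesses $\sigma_x$, $M_x$), which you do. Neither affects the validity of the argument.
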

\begin{remark}
By Lemma~\ref{lem-basic} and Proposition~\ref{equiv} we deduce that $\tilde{\rho}(\cM)$ and $\tilde{\rho}'(\cM)$ coincide.
\end{remark}
We now present a rather surprising result. 
\rmj{The intuitive meaning of the quantity $\tilde{\rho}$ is the following: for an arbitrary switching system, for any initial condition $x_0,$ one can find a switching signal $\sigma$ such that \begin{equation}\label{eq-feedbackvsopen}\limsup_{t\to\infty} |x_{\sigma,x_0}(t)|^{1/t}\leq \tilde{\rho}(\cM).\end{equation}  
One may \colp{ask whether such a} 
switching signal \colp{may be} 
given by some $0$-homogeneous function $\sigma(x),$ as a feedback control function.  Surprisingly,} \colp{this is not always the case. Roughly speaking, in general the ``most stabilizing'' switching behavior is not attainable by a $0$-homogeneous feedback.} 
\begin{proposition}
\label{prop-different}
For any $x_0\in \re^d,$ there exists a switching law $\sigma_{x_0}(\cdot)$ such that 
\begin{equation}\limsup_{t\to\infty} |x_{\sigma_{x_0},x_0}(t)|^{1/t}\leq \tilde{\rho}(\cM).\label{eq-prop}\end{equation}
On the other hand there does not always exist 
a $0$-homogeneous feedback such that each  solution $x(\cdot)$ of the corresponding closed loop system satisfies $\limsup_{t\to\infty} |x(t)|^{1/t}\leq \tilde{\rho}(\cM)$.
\end{proposition}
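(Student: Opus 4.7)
I would prove the two assertions separately. The first one is the easier part: fix $x_0 \neq 0$ and write $\lambda = \tilde{\rho}(\cM)$. Since $\tilde{\rho}_{x_0}(\cM) \leq \lambda$, and more generally $\tilde{\rho}_y(\cM) \leq \lambda$ for every $y \neq 0$, the definition of the pointwise stabilization radius yields, for every $k \geq 1$ and every point $y$, a switching signal and a constant $M(k,y)$ such that the corresponding trajectory from $y$ satisfies $|x(t)| \leq M(k,y)(\lambda+1/k)^t|y|$ for all $t \geq 0$. I would then build $\sigma_{x_0}$ piecewise along an increasing sequence $0 = T_0 < T_1 < T_2 < \dots$: on the window $[T_{k-1},T_k)$ use the signal that witnesses the rate $\lambda + 1/k$ starting from the current state $x(T_{k-1})$. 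Iterating this construction gives $|x(T_n)| \leq \left(\prod_{k=1}^n M(k,x(T_{k-1}))\right)\prod_{k=1}^n (\lambda+1/k)^{T_k-T_{k-1}}|x_0|$, and, provided the $T_k$ grow fast enough that the accumulated constants are washed out by the dominant exponential factor, a direct bound on $t^{-1}\log|x(t)|$ (split into the contributions of early and late windows) yields $\limsup_{t\to\infty}|x(t)|^{1/t} \leq \lambda$, which is the statement of \eqref{eq-prop}.

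The second assertion is more delicate, since one has to produce an $\cM$ for which \emph{no} $0$-homogeneous $\sigma$ works. The conceptual point to exploit is that a $0$-homogeneous feedback only reads the direction of the current state, so it induces a deterministic self-map $\phi_\sigma$ of the unit sphere $\mathbb{S}^{d-1}$; any recurrent orbit of $\phi_\sigma$ (fixed point or periodic orbit) fixes the sequence of local scaling factors along it, and these factors yield a lower bound on the asymptotic growth rate of any trajectory initialized on that orbit. My plan is to design a low-dimensional $\cM$ such that (a) for every $x_0$ one may still achieve rate $\tilde{\rho}(\cM)$ using an open-loop, time-dependent signal that ``escapes'' any would-be periodic orbit on the sphere after sufficiently many iterations, but (b) every angular partition $\mathbb{S}^{d-1} = \bigsqcup_i R_{A_i}$ induced by a $0$-homogeneous $\sigma$ unavoidably creates a recurrent orbit along which the product of scaling factors exceeds $\tilde{\rho}(\cM)^t$. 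A natural way to force (b) is to arrange that the elements of $\cM$ share ``bad'' invariant directions (e.g.\ eigenvectors with suboptimal eigenvalue) so that a pigeonhole argument on any partition of $\mathbb{S}^{d-1}$ assigns at least one such direction to a matrix fixing it, producing a stationary suboptimal orbit.

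The main obstacle is clearly the second part, and more precisely the universal quantifier over feedbacks: one needs an impossibility argument applying to \emph{every} $0$-homogeneous $\sigma$, not just to a plausible candidate. I would expect the proof to be carried out on a carefully engineered two-dimensional example, possibly a variant of Example~\ref{ex-urbano}, where the combinatorics of angular partitions together with the spectral/invariant structure of $\cM$ forces the obstruction regardless of how $\mathbb{S}^1$ is cut up.
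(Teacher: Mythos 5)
Your first part is essentially the paper's argument: you concatenate witnesses for rates $\lambda+1/k$ over successive windows and let the exponential gains absorb the accumulated constants. The paper phrases this with the uniform-constant quantity $\tilde{\rho}'$ (so $M$ depends only on $k$, not on the restart point), but since your construction is adaptive --- you may pick $T_k$ after seeing $x(T_{k-1})$ and hence after knowing $M(k,x(T_{k-1}))$ --- your variant goes through, provided you also record the intermediate bound inside each window (not only at the endpoints $T_n$) when you estimate $t^{-1}\log|x(t)|$. This part is fine.

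The second part, however, has a genuine gap: the statement is an existence claim, and you never actually produce the set $\cM$. Your general mechanism (a $0$-homogeneous feedback induces a self-map of the sphere, and any recurrent orbit of that map pins down the growth rate) is the right intuition, but the specific device you propose --- shared ``bad'' invariant directions plus a pigeonhole argument over angular partitions --- is not developed to the point where one could check it, and it is not obvious it can be made to work: you would still have to show simultaneously that $\tilde{\rho}(\cM)$ equals the target value (so the open-loop signal really does better) and that \emph{every} partition creates a recurrent orbit with strictly larger growth, and nothing in your sketch forces recurrence for an arbitrary measurable $0$-homogeneous $\sigma$. The paper's example is three-dimensional and exploits a sharper trap: $\cM=\{A,B\}$ with $A=\mathrm{diag}(2,1,1)$ and $B$ the rank-one all-ones-columns matrix, started at $\vectun=(1,1,1)^T$. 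The point is that $B\vectun=2\vectun$ and, more importantly, $BA^T\vectun=2\vectun$ for every $T$, so whatever a $0$-homogeneous feedback does from $\vectun$, the first application of $B$ returns the state to the ray through $\vectun$ with a net factor $2$; homogeneity then forces the trajectory to be periodic in direction with growth rate $2^{1/(T+1)}>1$, while never applying $B$ gives rate $2$. Meanwhile $\tilde{\rho}(\cM)=1$ because $BA^t=B$ is bounded uniformly in $t$ (giving $\tilde{\rho}\le 1$ via $\tilde\rho(\cM^{t+1})\le\|BA^t\|$) and the second coordinate is non-decreasing from $\vectun$ (giving $\tilde{\rho}_{\vectun}\ge 1$). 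Without such a concrete construction and the verification of both bounds on $\tilde{\rho}$, the second assertion remains unproved.
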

\begin{proof}
Let us prove the first part of the statement. First, if $\tilde{\rho}_{x_0}(\cM)<\tilde{\rho}(\cM),$ the proof is obvious.  So, let us suppose that $\tilde{\rho}_{x_0}(\cM)=\tilde{\rho}(\cM).$ 
We construct a suitable solution $x_{\sigma_{x_0},x_0}(\cdot)$ of the system such that $\limsup_{t\to\infty} |x_{\sigma_{x_0},x_0}(t)|^{1/t}=\tilde{\rho}(\cM)$. Let us denote by $M_n$ the value of $M$ in the definition of $\tilde{\rho}'$ corresponding to $\lambda=\tilde{\rho}(\cM)+2^{-n-1}$.
By definition of $\tilde{\rho}'$ we know that  it is possible to construct $x_{\sigma_{x_0},x_{0}}(\cdot)$ such that there exists $t_1\in \mathbb{N}$ large enough satisfying $|x_{\sigma_{x_0},x_{0}}(t_1)|\leq \frac1{M_1} (\tilde{\rho}(\cM)+1)^{t_1} |x_{0}|$. Define $x_1=x_{\sigma_{x_0},x_{0}}(t_1)$. Reasoning as before we can \pao{prolong $x_{\sigma_{x_0},x_{0}}(\cdot)$  on $[0,t_1+t_2]$, 
where} $t_2\in \mathbb{N}$ is such that $|x_{\sigma_{x_0},x_{0}}(t_1+t_2)|\leq \frac{M_1}{M_2} (\tilde{\rho}(\cM)+\frac12)^{t_2} |x_{1}|\leq \frac{1}{M_2} (\tilde{\rho}(\cM)+\frac12)^{t_2}(\tilde{\rho}(\cM)+1)^{t_1} |x_{0}|$ and in addition $|x_{\sigma_{x_0},x_{0}}(t_1+s)| \leq  (\tilde{\rho}(\cM)+\frac12)^{s}(\tilde{\rho}(\cM)+1)^{t_1} |x_{0}|$ for $s\in [0,t_2]$. Following the same lines one can prolong indefinitely the trajectory adding at each time an interval of length $t_n\in \mathbb{N}$ to its domain in such a way that
\[
|x_{\sigma_{x_0},x_{0}}(t)| \leq (\tilde{\rho}(\cM)+2^{-N})^{t-T_N} \prod_{n=1}^{N} (\tilde{\rho}(\cM)+2^{-n+1})^{t_n}|x_0|,
\]
where $T_N=\sum_{n=1}^{N} t_n$ and  $t\in [T_N,T_{N+1}]$. Without loss of generality we assume that  $T_N$ goes to infinity with $N$, so that $t$ can be arbitrarily large.
We have
\[
\log (|x_{\sigma_{x_0},x_{0}}(t)|^{1/t}) \leq \frac{t-T_N}t \log(\tilde{\rho}(\cM)+2^{-N}) +\sum_{n=1}^N\frac{t_n}t \log(\tilde{\rho}(\cM)+2^{-n+1})+\frac1t \log(|x_0|).
\]
which converges to $\log(\tilde{\rho}(\cM))$ as $t$ goes to infinity, concluding the proof of the first part.

We now provide an explicit example to prove the  second part of the proposition.
Let us consider the discrete-time system corresponding to the following set of matrices
\begin{equation}
\cM=\{A,B\}=\left\{\begin{pmatrix}2&0&0\\0 & 1&0\\0&0&1\end{pmatrix},\begin{pmatrix}0&1&1\\0&1&1\\0&1&1\end{pmatrix}\right \}.
\end{equation}
We claim that for any $0$-homogeneous function $\sigma(x)$  the corresponding feedback solution $x_{\vectun}(\cdot)$ starting at the all-ones vector $\vectun$ (i.e., $\vectun = (1,1,1)^T$) is such that  $\limsup_{t\to\infty} |x_{\vectun}(t)|^{1/t}> 1$. We proceed by contradiction: let us suppose that there exists such a function $\sigma(x)$ with $\limsup_{t\to\infty} |x_{\vectun}(t)|^{1/t}\leq 1$.
First, certainly $A_{\sigma(\vectun)}\neq B,$ because $B\vectun = 2\vectun,$ and this would imply that $|x_{\vectun}(t)|^{1/t}= 2.$  Also, certainly this trajectory 
must contain some multiplication by $B,$ because $|A^t\vectun|\approx 2^t.$  Let us call $T$ the first $t$ such that $x(t)$ is multiplied by $B.$ Since $BA^T\vectun = 2\vectun,$ the trajectory of $x$ must be of the shape $\dots BA^TBA^T\vectun,$ and we see that $\lim_{t\to\infty} |x_{\vectun}(t)|^{1/t}>1,$ leading to a contradiction.\\
Let us now show that $\tilde{\rho}_{\vectun}(\cM)=\tilde{\rho}(\cM)=1$.
Since $\|BA^t\|$ is bounded by a constant which is independent on $t,$ and obviously $\tilde\rho(\cM^{t+1})\leq \|BA^t\|,$ it turns out by item~$(ii)$ in Lemma~\ref{lem-basic} that $\tilde{\rho}(\cM)\leq 1$. 
The inequality $\tilde{\rho}_{\vectun}(\cM)\geq 1$ comes from the fact that the second component is always non-decreasing along trajectories of the system starting at $\vectun$.
\end{proof}

We conclude this section with a result showing a sort of optimality property of the \rhoprime. 
\begin{proposition}
\label{p-optim}
\colp{There always exist $x_0\in \mathbb{R}^d$ and $\sigma_0(\cdot)$ such that  $|x_{\sigma_0,x_0}(t)|\leq \tilde{\rho}(\cM)^t |x_0|$ for any $t\in \mathbb{N}$.}
\end{proposition}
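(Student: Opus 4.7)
The plan is to produce $(x_0,\sigma_0)$ as a compactness limit of a family of \emph{almost optimal} trajectories. The multiplicative constant $M$ appearing in the definition of $\tilde{\rho}'$ will be removed by shifting each such trajectory forward to a near-peak time before passing to the limit.

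Set $\rho:=\tilde{\rho}(\cM)$. By the remark following Proposition~\ref{equiv} one has $\tilde{\rho}'(\cM)=\rho$, so for every $\epsilon>0$ there exists $M_\epsilon>0$ such that, for each $x\in\mathbb{R}^d$, one can find a switching signal $\sigma_{x,\epsilon}$ satisfying $|x_{\sigma_{x,\epsilon},x}(t)|\leq M_\epsilon(\rho+\epsilon)^t|x|$ for every $t\in\mathbb{N}$. I fix any unit vector $x$, write $\sigma_\epsilon:=\sigma_{x,\epsilon}$, and rescale by $y_\epsilon(t):=x_{\sigma_\epsilon,x}(t)/(\rho+\epsilon)^t$, so that $C_\epsilon:=\sup_t|y_\epsilon(t)|\leq M_\epsilon<\infty$.

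Next I would apply a \emph{peak-time shift}: choose $s_\epsilon\in\mathbb{N}$ with $|y_\epsilon(s_\epsilon)|\geq (1-\delta_\epsilon)\,C_\epsilon$ for some $\delta_\epsilon\to 0$, and consider the translated trajectory $\tilde{x}_\epsilon(t):=x_{\sigma_\epsilon,x}(s_\epsilon+t)$ driven by the shifted signal $\tilde{\sigma}_\epsilon(\cdot):=\sigma_\epsilon(s_\epsilon+\cdot)$. Using the identity $\tilde{x}_\epsilon(t)=(\rho+\epsilon)^{s_\epsilon+t}y_\epsilon(s_\epsilon+t)$ together with the definition of $s_\epsilon$, one directly obtains
\[
|\tilde{x}_\epsilon(t)|\leq \frac{(\rho+\epsilon)^t}{1-\delta_\epsilon}\,|\tilde{x}_\epsilon(0)|,\qquad\forall\,t\geq 0,
\]
so that the constant in front of $(\rho+\epsilon)^t$ is now arbitrarily close to $1$.

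Finally I would normalize $x_{0,\epsilon}:=\tilde{x}_\epsilon(0)/|\tilde{x}_\epsilon(0)|$ on the unit sphere and invoke compactness of the sphere together with compactness of $\cM^{\mathbb{N}}$ in the product topology (the latter following from Assumption~\ref{assum} via Tychonoff's theorem). A diagonal extraction along $\epsilon_k\to 0$ yields $x_{0,\epsilon_k}\to x_0^*$ and $\tilde{\sigma}_{\epsilon_k}(s)\to\sigma^*(s)$ for every $s\in\mathbb{N}$; joint continuity of the length-$t$ matrix product map $(A_1,\ldots,A_t,x)\mapsto A_t\cdots A_1 x$ then allows me to pass to the limit in the displayed bound and obtain $|x_{\sigma^*,x_0^*}(t)|\leq \rho^t=\rho^t|x_0^*|$ for every $t$, as required. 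The main obstacle, I expect, will be precisely this last passage to the limit: one must make sure that the pointwise-limit signal $\sigma^*$ is indeed admissible and that the trajectory at each fixed finite time genuinely commutes with the limiting procedure; once that continuity point is handled, the rest is routine.
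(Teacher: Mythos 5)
Your proposal is correct and follows essentially the same route as the paper's own proof: select a near-peak time of the rescaled almost-optimal trajectory, shift and normalize there to make the multiplicative constant tend to $1$, and pass to the limit by compactness of the unit sphere and of the signal space. The only cosmetic difference is that you spell out the Tychonoff/diagonal-extraction step and the continuity of finite products, which the paper leaves implicit.
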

\begin{proof}
For an arbitrary $\epsilon>0,$ let $\lambda_{\epsilon}=\tilde{\rho}(\cM)+\epsilon$ and consider a trajectory $x_{\sigma_{\epsilon},x_{\epsilon}}(\cdot)$ satisfying $|x_{\sigma_{\epsilon},x_{\epsilon}}(t)|\leq M\lambda_{\epsilon}^t |x_{\epsilon}|$ for any $t\in \mathbb{N}$ and some $M>0$. Let $t_{\epsilon}$ such that
\begin{equation}
\sup_{t\in \mathbb{N}}\frac{|x_{\sigma_{\epsilon},x_{\epsilon}}(t)|}{\lambda_{\epsilon}^t |x_{\epsilon}|} \leq (1+\epsilon)\frac{|x_{\sigma_{\epsilon},x_{\epsilon}}(t_{\epsilon})|}{\lambda_{\epsilon}^{t_{\epsilon}} |x_{\epsilon}|}.\label{sup1}
\end{equation}
Define $\hat{x}_{\epsilon}=\frac{x_{\sigma_{\epsilon},x_{\epsilon}}(t_{\epsilon})}{|x_{\sigma_{\epsilon},x_{\epsilon}}(t_{\epsilon})|}$ and $\hat{\sigma}_{\epsilon}(\cdot)=\sigma_{\epsilon}(t_{\epsilon}+\cdot)$. Then from \eqref{sup1} we have 
\begin{equation}\label{sup2}
\sup_{t\in \mathbb{N}}\frac{|x_{\hat{\sigma}_{\epsilon},\hat{x}_{\epsilon}}(t)|}{\lambda_{\epsilon}^t} \leq 1+\epsilon.
\end{equation}
Passing to the limit as $\epsilon$ goes to $0$, we have (up to subsequences) that  $\hat{x}_{\epsilon}$ converges to some unit vector $x_0$ and $x_{\hat{\sigma}_{\epsilon},\hat{x}_{\epsilon}}$ converges to a trajectory $x_{\sigma_0,x_0}$ uniformly on compact subsets of $\mathbb{N}$. Thus,  passing to the limit at the left- and right-hand side of \eqref{sup2} we get the thesis.
\end{proof}

\section{Two tools from control theory}\label{sec-control}
\colp{In this section we analyze the stabilizability notions introduced above in the discrete-time setting by taking advantage of two classical tools from control theory: control-Lyapunov functions and the joint spectral subradius.}

\subsection{Control-Lyapunov functions and their mathematical properties}
\label{sec-lyap}
%
Let us introduce the following adapted definition of control-Lyapunov function.
\begin{definition}\label{Lyap-def}
We say that a positively homogeneous\footnote{A function $f:\ \re^d\rightarrow \re$ is said \emph{positively homogeneous} if $f(\lambda x)=|\lambda| f(x)\quad \forall \lambda\neq 0,\ \forall x.$ }  function $V:\mathbb{R}^d\to \mathbb{R}_+$ is a control-Lyapunov function for $\mathcal{M}$ if 
\begin{itemize}
\item[(1)] there exist two constants $0<m\leq M$ such that $m|x|\leq V(x) \leq M|x|$, 
\item[(2)] there exists $\mu\in(0,1)$ such that for any $x\in\mathbb{R}^d$ there exists $\sigma_x(\cdot)$ with $V(x_{\sigma_x,x}(t))<\mu^t V(x)$ for $t\in \mathbb{N}$. 
\end{itemize}
\end{definition}
If there exists a control-Lyapunov function for $\mathcal{M}$ and  $\mu$ is given by item $(2)$ above, then clearly $\tilde\rho(\cM)\leq \mu<1$, that is the system is pointwise stabilizable. If in addition the control-Lyapunov function is regular enough, then the system turns out to be (uniformly) feedback stabilizable, as stated by the result here below.
\begin{proposition}\label{Lyap}
If  $\mathcal{M}$ admits a Lipschitz continuous control-Lyapunov function $V$ then $\mathcal{M}$ is uniformly feedback stabilizable, and the stabilizing feedback $\sigma(\cdot)$ can be taken piecewise constant. 
\end{proposition}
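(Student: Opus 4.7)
The strategy is to distill a one-step decrease inequality from the control-Lyapunov function, localize it by continuity, and then apply a compactness argument on the unit sphere in order to obtain a piecewise constant feedback. The first observation is that condition $(2)$ in Definition~\ref{Lyap-def} applied at $t=1$ tells us that for every $x\in\mathbb{R}^d\setminus\{0\}$ there exists at least one $a_x\in\cM$ (namely $a_x=\sigma_x(0)$) such that $V(A_{a_x}x)<\mu V(x)$. The plan is to show that such a choice can be made globally by a $0$-homogeneous, piecewise constant function of $x$.

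Since $V$ is Lipschitz, and since the map $y\mapsto V(A_a y)-\mu V(y)$ is therefore continuous (uniformly in $a$ over the compact $\cM$), the set
\[
U_x=\{y\in\rd\setminus\{0\}:V(A_{a_x}y)<\mu V(y)\}
\]
is open and contains $x$. By positive homogeneity of $V$ and linearity of $A_{a_x}$, the condition defining $U_x$ is itself $0$-homogeneous, so $U_x$ is a cone. Intersecting with the unit sphere $S^{d-1}$ we obtain an open cover $\{U_x\cap S^{d-1}\}_{x\in S^{d-1}}$ from which, by compactness of $S^{d-1}$, we extract a finite subcover $U_{x_1},\dots, U_{x_N}$. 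Setting $W_i=(U_{x_i}\cap S^{d-1})\setminus\bigcup_{j<i}U_{x_j}$ yields a finite partition of $S^{d-1}$, and we define the feedback by $\sigma(y)=a_{x_i}$ for $y\in W_i$. We then extend this definition to $\rd\setminus\{0\}$ by $0$-homogeneity, setting $\sigma(x)=\sigma(x/|x|)$. By construction $\sigma$ is $0$-homogeneous, takes only finitely many values in $\cM$, and the preimages of these values are finite unions of cones, hence $\sigma$ is piecewise constant in the natural sense.

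It remains to verify the decay estimate for every closed-loop trajectory. For $x\neq 0$, since $x/|x|\in W_i$ for some $i$ and $W_i\subset U_{x_i}$, we have $V(A_{\sigma(x)}(x/|x|))<\mu V(x/|x|)$; multiplying by $|x|$ and using positive homogeneity of $V$ yields $V(A_{\sigma(x)}x)<\mu V(x)$. Iterating along any closed-loop trajectory $x(\cdot)$ starting from $x_0$ gives $V(x(t))<\mu^t V(x_0)$, and therefore $|x(t)|\le\frac{1}{m}V(x(t))\le\frac{M}{m}\mu^t|x_0|$, from which uniform feedback stabilizability follows directly (given $\epsilon_1,\epsilon_2>0$, pick $T$ large enough that $(M/m)\mu^T\epsilon_1<\epsilon_2$).

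The only delicate point is the continuity/compactness step that makes the feedback piecewise constant: one must make sure that the open sets $U_x$ are chosen using the same $\mu$ as given by the Lyapunov function (they are, because the strict inequality $V(A_{a_x}x)<\mu V(x)$ persists on a neighborhood), and that Lipschitz continuity of $V$ is indeed enough to guarantee the openness of the $U_x$. Beyond this, the argument is a standard partition-of-unity-like construction on the sphere.
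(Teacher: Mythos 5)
Your proof is correct and follows essentially the same route as the paper's: extract the one-step strict decrease $V(A_{\sigma_x(0)}x)<\mu V(x)$ from the control-Lyapunov function, use continuity of $V$ to propagate it to an open neighborhood, extend by homogeneity to cones, and conclude by compactness of the unit sphere that finitely many cones with a constant choice of matrix suffice. Your version merely spells out the covering/partition argument and the final $|x(t)|\le (M/m)\mu^t|x_0|$ estimate more explicitly than the paper does.
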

\begin{proof}
The existence of a stabilizing feedback is trivial, as it is enough to associate with any $x\in \mathbb{R}^d$ \colp{such that $|x|=1$ the value $\sigma_x(0)$, where $\sigma_x$ is as in Definition~\ref{Lyap-def}, and then extends by homogeneity on the whole space $\mathbb{R}^d\setminus\{0\}$}. 

The fact that the stabilizing feedback can be taken piecewise constant follows from the continuity of $V$ and the continuous dependence of the solutions with respect to the initial data. 
Indeed these properties imply that a switching law that produces a certain  rate of decrease of $V$ at time $1$ starting  at $x_0$ provides the same  rate of decrease starting from any point of  a small enough neighborhood of $x_0$, which can be extended by homogeneity to a cone. By a simple compactness argument there exists a finite number of cones covering $\mathbb{R}^d\setminus\{0\}$ in each of which the stabilizing feedback law may be taken constant. 
This concludes the proof of the proposition.
\end{proof} 
We now provide a converse Lyapunov theorem for our class of systems. We skip the proof, since our result is a quite straightforward adaptation of the one presented in \cite[Section 4.3.1]{sun}.   
\begin{proposition}
\label{Vlam}
For any $\lambda>\tilde{\rho}(\cM)$ the function $V_{\lambda}:\mathbb{R}^d\to \mathbb{R}_+$ 
\begin{equation}
\label{lyap-eq}
V_{\lambda}(x)=\sup_{t\geq 0}\inf_{\sigma(\cdot)}\frac{|x_{\sigma,x}(t)|}{\lambda^t}
\end{equation}
is well-defined, absolutely homogeneous, Lipschitz continuous and satisfies $V_{\lambda}(x_{\sigma,x}(t))\leq\lambda^t V_{\lambda}(x)$ for any $x\in\mathbb{R}^d,\ t\in\mathbb{N}$ and some $\sigma(\cdot)$, depending on $x$. In particular $\mathcal{M}$ is pointwise stabilizable if and only if it admits an absolutely homogeneous control-Lyapunov function.
\end{proposition}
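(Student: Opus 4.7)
The plan is to verify in turn the four properties of $V_\lambda$ asserted in the proposition --- well-definedness with a two-sided bound, absolute homogeneity, Lipschitz continuity, and the Lyapunov-type decay along some switching signal --- and then to deduce the equivalence with pointwise stabilizability. The crucial input is the remark following Proposition~\ref{equiv}: since $\tilde\rho(\cM)=\tilde\rho'(\cM)$, for any $\lambda>\tilde\rho(\cM)$ there exists a uniform constant $M>0$ such that for each $x\in\mathbb{R}^d$ one can find a switching signal $\sigma_x$ with $|x_{\sigma_x,x}(t)|\leq M\lambda^t|x|$ for all $t\geq 0$. Plugging $\sigma_x$ into the defining formula immediately yields $V_\lambda(x)\leq M|x|$, while the $t=0$ term in the supremum gives $V_\lambda(x)\geq |x|$, establishing well-definedness and the two-sided bound. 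Absolute homogeneity is an immediate consequence of the linearity identity $x_{\sigma,\alpha x}(t)=\alpha\,x_{\sigma,x}(t)$.

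The central step is the construction, for each $x$, of a switching signal $\sigma^*$ realizing $V_\lambda(x_{\sigma^*,x}(t))\leq\lambda^t V_\lambda(x)$ for every $t$. Following the approach of \cite[Section~4.3.1]{sun}, I would proceed by a compactness-and-diagonalization argument, in the spirit of the proof of Proposition~\ref{p-optim}: for each finite horizon $T$ pick, by compactness of $\cM^T$, a minimizer $\tilde\sigma^T$ of $\sup_{0\leq t\leq T}|x_{\sigma,x}(t)|/\lambda^t$, and then, using compactness of $\cM$ symbol by symbol, extract a diagonal subsequential limit $\sigma^*\in\cM^{\mathbb{N}}$. The key technical point is to show $|x_{\sigma^*,x}(t)|/\lambda^t\leq V_\lambda(x)$ for every $t\geq 0$, which rests on a minimax-type interchange $\sup_t\inf_\sigma=\inf_\sigma\sup_t$ made available by the compactness of $\cM$ and the geometric control of the tail afforded by $\lambda>\tilde\rho(\cM)$. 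Applying this bound to the tail of $\sigma^*$ starting at time $t$ then yields $V_\lambda(x_{\sigma^*,x}(t))\leq\sup_s|x_{\sigma^*,x}(t+s)|/\lambda^s\leq\lambda^t V_\lambda(x)$, which is the Lyapunov decay.

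For Lipschitz continuity I would approximate $V_\lambda$ by the truncations $V_\lambda^N(x):=\max_{0\leq t\leq N}\inf_\sigma|x_{\sigma,x}(t)|/\lambda^t$, each of which is continuous as a finite maximum of infima of continuous functions over the compact sets $\cM^t$. Choosing an intermediate $\lambda'\in(\tilde\rho(\cM),\lambda)$ and applying the uniform bound at rate $\lambda'$ gives $\inf_\sigma|x_{\sigma,x}(t)|/\lambda^t\leq M'(\lambda'/\lambda)^t|x|$, so $0\leq V_\lambda(x)-V_\lambda^N(x)\leq M'(\lambda'/\lambda)^{N+1}|x|$ and $V_\lambda^N\to V_\lambda$ uniformly on bounded sets with geometric rate; combined with absolute homogeneity and the standard Lipschitz estimate on each $V_\lambda^N$, this produces a Lipschitz constant for $V_\lambda$. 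Finally, the equivalence in the last sentence of the proposition follows at once: if $\cM$ is pointwise stabilizable then $\tilde\rho(\cM)<1$ by Proposition~\ref{equiv}, so any $\lambda\in(\tilde\rho(\cM),1)$ furnishes an absolutely homogeneous control-Lyapunov function $V_\lambda$ (with $\mu=\lambda$ in Definition~\ref{Lyap-def}), while the converse implication is immediate from the definition. The main obstacle, as in \cite{sun}, is precisely the minimax interchange used in the Lyapunov-decay step: producing a single switching signal that simultaneously realizes the sharp sup-over-time bound delivered by $V_\lambda$ requires the joint interplay of compactness of $\cM$, continuity of $V_\lambda$, and the geometric decay coming from $\lambda>\tilde\rho(\cM)$.
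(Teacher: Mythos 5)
First, a caveat: the paper does not actually prove Proposition~\ref{Vlam} --- it explicitly skips the proof, referring to an adaptation of \cite[Section 4.3.1]{sun} --- so the comparison here is about the internal correctness of your argument rather than about a divergence from the paper's route. Your treatment of well-definedness, of the two-sided bound $|x|\leq V_\lambda(x)\leq M|x|$, and of absolute homogeneity is correct. The Lipschitz step contains a small slip as written: a uniform limit of Lipschitz functions whose Lipschitz constants blow up need not be Lipschitz. However, your own estimates repair it: writing $g_t(x):=\inf_\sigma |x_{\sigma,x}(t)|/\lambda^t$, you have $V^N_\lambda(x)\geq g_0(x)=|x|$ while $\sup_{t>N}g_t(x)\leq M'(\lambda'/\lambda)^{N+1}|x|<|x|$ once $N$ is large, so in fact $V_\lambda=V^N_\lambda$ for such $N$; the supremum over $t$ is a maximum over a finite range independent of $x$, and a finite maximum of Lipschitz functions is Lipschitz. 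You should draw that conclusion explicitly rather than invoke uniform convergence.

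The genuine gap is in the decay step, which is the heart of the proposition. You reduce $V_\lambda(x_{\sigma^*,x}(t))\leq\lambda^t V_\lambda(x)$ to the interchange $\sup_t\inf_\sigma=\inf_\sigma\sup_t$, i.e.\ to the identity $V_\lambda=\hat V_\lambda$ with $\hat V_\lambda$ as in Remark~\ref{Vhat}, and assert that this interchange is ``made available by compactness.'' Compactness of $\cM$ does give attainment of the infimum defining $\hat V_\lambda$ via the diagonal extraction you describe, but it does not give the interchange: only $\sup_t\inf_\sigma\leq\inf_\sigma\sup_t$ is free, and no standard minimax theorem applies ($t$ ranges over $\mathbb{N}$ and there is no convexity/concavity structure in either variable). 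Worse, the interchange is not merely sufficient for the decay property but equivalent to it: if some $\sigma$ realizes $V_\lambda(x_{\sigma,x}(t))\leq\lambda^t V_\lambda(x)$ for all $t$, then $|x_{\sigma,x}(t)|\leq V_\lambda(x_{\sigma,x}(t))$ yields $\hat V_\lambda(x)\leq V_\lambda(x)$. So your argument assumes, at exactly the point where work is required, a statement equivalent to what is to be proved. The dynamic-programming identity $g_{t+1}(x)=\frac{1}{\lambda}\min_{A\in\cM}g_t(Ax)$ only gives $\lambda V_\lambda(x)\geq\sup_t\min_A g_t(Ax)$, whereas the one-step decay needs $\min_A\sup_t g_t(Ax)\leq\lambda V_\lambda(x)$ --- the inequality goes the wrong way. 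Note finally that the paper's own Remark~\ref{Vhat}, which treats $\hat V_\lambda$ as a possibly non-Lipschitz object distinct from $V_\lambda$, signals that the authors do not regard $V_\lambda=\hat V_\lambda$ as available. As it stands, the decay property --- and with it the implication from pointwise stabilizability to the existence of a control-Lyapunov function --- is not established by your proposal; one must either prove the one-step inequality directly or follow the actual construction in \cite{sun}.
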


An immediate consequence of the above result is the equivalence between the pointwise stabilizability and the uniform feedback stabilizability as stated in the following corollary. Notice that this result does not contradict Proposition~\ref{prop-different}, \rmj{even though they are opposite in spirit: the corollary below indicates that feedback stabilizability and pointwise stabilizability are equivalent, even though, for feedback stabilizability, the ``most stabilizing'' switching behavior is not always attainable by a homogeneous feedback.}

\begin{corollary}\label{cor-feedback-pointwise}
The three equivalent conditions in Proposition~\ref{equiv} are also equivalent to the following:
\begin{itemize}
\item[(iv)] $\mathcal{M}$ is uniformly feedback stabilizable.
\end{itemize}
\end{corollary}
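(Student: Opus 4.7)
The plan is to establish the equivalence (iii)$\,\Leftrightarrow\,$(iv), which combined with Proposition~\ref{equiv} yields the corollary. The direction (iv)$\,\Rightarrow\,$(iii) is immediate from Definition~\ref{def-pointwise}: given a $0$-homogeneous uniformly stabilizing feedback $\sigma:\mathbb{R}^d\to\cM$, for any $x_0\in\mathbb{R}^d$ the open-loop signal obtained by sampling $\sigma$ along the associated closed-loop trajectory drives $x_0$ to the origin, so that $\cM$ is pointwise stabilizable.

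For the nontrivial direction (iii)$\,\Rightarrow\,$(iv), my idea is to chain the converse Lyapunov result (Proposition~\ref{Vlam}) with the direct one (Proposition~\ref{Lyap}). By Proposition~\ref{equiv} and the Remark that follows it, pointwise stabilizability is equivalent to $\tilde{\rho}(\cM)=\tilde{\rho}'(\cM)<1$. I would then fix any $\lambda\in(\tilde{\rho}(\cM),1)$ and consider the function $V_{\lambda}$ defined in \eqref{lyap-eq}. By Proposition~\ref{Vlam}, $V_{\lambda}$ is absolutely homogeneous, Lipschitz continuous, and satisfies the decay property $V_{\lambda}(x_{\sigma,x}(t))\leq \lambda^{t}V_{\lambda}(x)$ along some switching signal depending on $x$.

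The one point that requires care is checking that $V_{\lambda}$ fulfills the two-sided bound required in item~(1) of Definition~\ref{Lyap-def}. The lower bound $V_{\lambda}(x)\geq |x|$ is obtained at once by evaluating the supremum in \eqref{lyap-eq} at $t=0$. For the upper bound, one cannot appeal directly to the definition of $\tilde{\rho}$ since the constant $M$ there may depend on $x$; this is where the equality $\tilde{\rho}(\cM)=\tilde{\rho}'(\cM)$ becomes essential, as it furnishes a single $M>0$ with $\inf_{\sigma}|x_{\sigma,x}(t)|\leq M\lambda^{t}|x|$ for every $x$ and $t$, whence $V_{\lambda}(x)\leq M|x|$. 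Thus $V_{\lambda}$ qualifies as a Lipschitz continuous control-Lyapunov function with $\mu=\lambda<1$, and Proposition~\ref{Lyap} supplies a $0$-homogeneous, piecewise constant feedback that uniformly stabilizes $\cM$, which is precisely (iv).

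The main obstacle is therefore conceptual rather than technical: recognizing that the two Lyapunov results of Section~\ref{sec-control} are designed to dovetail, with the identification of $\tilde{\rho}$ and $\tilde{\rho}'$ providing the missing uniformity in the upper bound. I would also emphasize that this argument is consistent with Proposition~\ref{prop-different}, since the feedback produced achieves a rate $\lambda$ strictly larger than $\tilde{\rho}(\cM)$: qualitative asymptotic stability is recovered by a homogeneous feedback, but the optimal decay rate in general is not.
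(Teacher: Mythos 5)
Your proposal is correct and follows essentially the same route as the paper: the easy implication (iv)$\Rightarrow$(iii) plus the chain Proposition~\ref{Vlam} $\to$ Proposition~\ref{Lyap} for (iii)$\Rightarrow$(iv). The paper states this in one line, while you usefully spell out the verification that $V_{\lambda}$ satisfies the two-sided bound of Definition~\ref{Lyap-def} (lower bound from $t=0$, upper bound from $\tilde{\rho}=\tilde{\rho}'$), but the substance is identical.
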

\begin{proof}
Clearly, if $\mathcal{M}$ is uniformly feedback stabilizable then it is also pointwise stabilizable.
The other implication trivially follows from Proposition~\ref{Lyap} and Proposition~\ref{Vlam}. 
\end{proof}

\vspace{5pt}

\begin{remark}
$V_{\lambda}(x)$ might not be convex.  This makes $\tilde \rho$ hard to compute. For instance, it hampers our ability to provide a guaranteed accuracy for our algorithms, while this is possible for, for instance, the joint spectral radius (see \cite[Theorem 2.12]{jungers_lncis}).  Indeed, the joint spectral radius admits a similar notion of Lyapunov function, but in that case it is convex.
\end{remark}

\begin{remark}
\label{Vhat}
The function 
\[\hat{V}_{\lambda}(x)=\inf_{\sigma(\cdot)}\sup_{t\geq 0}\frac{|x_{\sigma,x}(t)|}{\lambda^t}\]
is also a control-Lyapunov function for $\mathcal{M}$. This function can be shown to be lower semi-continuous  but in general there is no clear indication that it is also Lipschitz. 
\end{remark}


\subsection{The joint spectral subradius}
As already observed (see Example~\ref{ex-urbano}), in general the joint spectral subradius $\check\rho(\cM)$ can be strictly larger than the stabilization radius. 
We now show that if all the matrices in $\cM$ have only positive entries, then $\tilde\rho(\cM)=\check \rho (\cM).$\\
We say that a matrix $A$ possesses an invariant (proper) cone $K$ if $ A
K\subset K .$  We say that a convex closed cone $K'$ is {\em embedded in} $K$ if
$(K'\setminus \{0\}) \subset  \inter K.$ In this case we call $\{K, K'\}$ an
{\em embedded pair}. (The embedded
cone $K'$ may be degenerate, i.e., may have an empty interior.) An embedded pair $\{K,K'  \}$ is called
an {\em invariant pair} for a matrix
$A$ (resp. a set of matrices $\cM$) if the cones $K$ and  $K'$ are
both invariant for $A$ (resp. for all the matrices in $\cM$).  Finally, given a particular cone $K,$ for two vectors $x,y\in \re^d,$ we note $x\geq_K y$ for $x-y\in K.$
{ Given $v\in \re^d\setminus\{0\}$ we define $\re^d_v$ as the half-space $\{x\in\re^d | v^Tx\geq 0\}$. We say that a cone is \emph{proper} if it is contained in some half-space.
\begin{lemma}
\label{l-convexcone}
If a convex closed cone $K$ is proper, then there exists $C>0$ such that $w_1\geq_K w_2\geq_K 0$ implies $|w_1|\geq C|w_2|$.
\end{lemma}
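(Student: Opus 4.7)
The plan is a direct compactness argument on the slice of $K$ by the unit sphere. Since $K$ is proper, I would fix a unit vector $v\in\re^d$ such that $K\subseteq \re^d_v$, i.e.\ $v^Tx\geq 0$ for every $x\in K$. The key quantity to consider is
\[
\alpha\;=\;\min\{v^Tx:\ x\in K,\ |x|=1\},
\]
which is attained because $\{x\in K:\ |x|=1\}$ is a closed subset of the unit sphere, hence compact. The crux of the proof is to show $\alpha>0$; this is precisely the content of the properness hypothesis, which I read as ensuring that $K\setminus\{0\}$ sits strictly inside an open half-space (equivalently, $K\cap(-K)=\{0\}$). Were $\alpha=0$, there would exist a nonzero $x^\star\in K$ with $v^Tx^\star=0$, contradicting this property.

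Granted $\alpha>0$, the rest of the argument is immediate. For $w_1\geq_K w_2\geq_K 0$ with $w_2\neq 0$, applying the definition of $\alpha$ to the unit vector $w_2/|w_2|\in K$ yields $v^Tw_2\geq \alpha|w_2|$. From $w_1-w_2\in K\subseteq\re^d_v$ I would deduce $v^T(w_1-w_2)\geq 0$, hence $v^Tw_1\geq v^Tw_2$. Combining with the Cauchy--Schwarz bound $|w_1|\geq v^Tw_1$ (using $|v|=1$) gives
\[
|w_1|\;\geq\; v^Tw_1\;\geq\; v^Tw_2\;\geq\; \alpha|w_2|,
\]
so the constant $C=\alpha$ works, and the case $w_2=0$ is trivial.

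The only real obstacle is the strict positivity of $\alpha$, which is where properness is genuinely used; if one only knew that $K$ is contained in a closed half-space without pointedness, the lemma would fail, as one sees on the degenerate example $K=\re^d_v$ by taking $w_1$ of unit norm in $K$ and $w_2$ a large vector parallel to the boundary hyperplane. Under the intended pointedness reading of ``proper'', the compactness-plus-continuity argument outlined above yields $\alpha>0$ and completes the proof.
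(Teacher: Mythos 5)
Your proof is correct and rests on the same underlying mechanism as the paper's — compactness of the unit-norm slice $\{x\in K:|x|=1\}$ together with the pointedness of $K$ — but it is organized differently: you argue directly and produce the explicit constant $C=\alpha=\min\{v^Tx:\ x\in K,\ |x|=1\}$ via the chain $|w_1|\geq v^Tw_1\geq v^Tw_2\geq\alpha|w_2|$, whereas the paper argues by contradiction, extracting from putative sequences a limit $w^*$ with $-w^*\in K$ (using closedness of $K$) and contradicting the fact that $K\setminus\{0\}$ lies in the open half-space. Your version has the small advantage of exhibiting the constant. You are also right to flag that the statement is false if ``proper'' is read literally as ``contained in some closed half-space'' (the counterexample $K=\re^d_v$ you give is exactly the obstruction); the paper's own proof silently upgrades the hypothesis to $K$ being \emph{embedded} in $\re^d_v$, i.e.\ $K\setminus\{0\}\subset\inter\re^d_v$, which is precisely the strict-positivity of your $\alpha$. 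So your reading of the hypothesis is the intended one, and with it the argument is complete.
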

\begin{proof}
By contradiction assume there exist two sequences $\{w^k_1\}_{k\geq1},\{w^k_2\}_{k\geq1}$ of vectors in $K$ such that, for $k\geq1$, $w_1^k\geq_K w_2^k\geq_K 0$, $|w_2^k|=1$ and $\lim_{k\to \infty} w_1^k=0$. Without loss of generality we assume that $w_2^k$ converges to a unit vector $w^*\in K$. Then one has that $w_1^k- w_2^k\in K$ converges to $-w^*\in K,$ which contradicts the fact that $K$ is embedded in $\re^d_v$.
\end{proof}
}
\begin{theorem}
If the matrices in $\cM$ have a common invariant embedded pair of proper cones {$\{K,K'\},$ then $\tilde\rho(\cM)=\check \rho (\cM).$}
\end{theorem}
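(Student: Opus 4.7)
The plan is to show the two inequalities $\tilde{\rho}(\cM)\leq\check{\rho}(\cM)$ and $\check{\rho}(\cM)\leq\tilde{\rho}(\cM)$ separately. The first inequality holds with no hypothesis on $\cM$: any open-loop switching sequence $\sigma^*$ realizing $\|A_{\sigma^*(t)}\cdots A_{\sigma^*(0)}\|^{1/t}\to\check{\rho}(\cM)$, used as a (constant) feedback, yields $\tilde{\rho}_x(\cM)\leq\check{\rho}(\cM)$ for every $x$, hence $\tilde{\rho}(\cM)\leq\check{\rho}(\cM)$. All the work therefore goes into proving $\check{\rho}(\cM)\leq\tilde{\rho}(\cM)$ under the embedded-cone hypothesis.

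For this reverse inequality, I would fix $\epsilon>0$, set $\lambda=\tilde{\rho}(\cM)+\epsilon$, and pick a unit vector $x_0\in K'\setminus\{0\}$, which by assumption lies in $\inter K$. By definition of $\tilde{\rho}$ there exist a switching sequence $\sigma^*$ and a constant $M>0$ such that $|A_{\sigma^*(t)}\cdots A_{\sigma^*(0)}\,x_0|\leq M\lambda^t$ for all $t\in\mathbb{N}$. The central idea is to use the cone structure to ``lift'' this single-orbit bound into a bound on the operator norm $\|A_{\sigma^*(t)}\cdots A_{\sigma^*(0)}\|$, from which the desired control on $\check{\rho}(\cM)$ follows immediately.

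The lifting argument goes as follows. Since $x_0\in\inter K$, one can choose $\alpha_0>0$ large enough so that $\alpha_0 x_0\pm y\in K$ for every $y$ in the closed Euclidean unit ball. For such $y$, I would write
\[y=\tfrac{1}{2}\bigl[(\alpha_0 x_0+y)-(\alpha_0 x_0-y)\bigr],\]
and observe that $0\leq_K \alpha_0 x_0\pm y\leq_K 2\alpha_0 x_0$. Since every matrix in $\cM$ leaves $K$ invariant, it preserves the partial order $\leq_K$; applying any product $P_t:=A_{\sigma^*(t)}\cdots A_{\sigma^*(0)}$ therefore gives $0\leq_K P_t(\alpha_0 x_0\pm y)\leq_K 2\alpha_0 P_t x_0$, and Lemma~\ref{l-convexcone} yields $|P_t(\alpha_0 x_0\pm y)|\leq (2\alpha_0/C)|P_t x_0|$. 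The triangle inequality then furnishes $|P_t y|\leq (2\alpha_0 M/C)\lambda^t$ uniformly in $y$ on the unit ball, so $\|P_t\|\leq (2\alpha_0 M/C)\lambda^t$. Taking $t$-th roots and letting $t\to\infty$ gives $\check{\rho}(\cM)\leq\lambda$, and since $\epsilon$ was arbitrary, $\check{\rho}(\cM)\leq\tilde{\rho}(\cM)$.

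The main conceptual obstacle is precisely the passage from a single-orbit bound to an operator-norm bound. The embedded-proper-cone hypothesis is what makes this passage possible: the condition $x_0\in\inter K$ ensures that a single chosen vector dominates (in the cone order, after scaling) every direction in $\mathbb{R}^d$, while properness of $K$ (as exploited by Lemma~\ref{l-convexcone}) rules out the cancellations that would otherwise prevent the cone ordering from controlling Euclidean norms. Without either of these two ingredients the argument would collapse, which is consistent with Example~\ref{ex-urbano}, where the strict inequality $\check{\rho}(\cM)>\tilde{\rho}(\cM)$ occurs in the absence of any invariant cone.
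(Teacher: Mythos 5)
Your proof is correct, but it takes a genuinely different route from the paper's. The paper establishes only the nontrivial inequality $\tilde\rho(\cM)\geq\check\rho(\cM)$, and does so by importing machinery from Protasov--Jungers--Blondel: it shows that the conic-programming quantity $\check\sigma_K(\cM)$ (the best $\lambda$ for which some $v\in K$ satisfies $Av\geq_K\lambda v$ for all $A\in\cM$) is a lower bound for $\tilde\rho(\cM)$ --- using Lemma~\ref{l-convexcone} to convert $Av\geq_K\lambda^t v$ into $|Av|\geq C\lambda^t|v|$ --- and then invokes the asymptotic exactness $\lim_t\check\sigma_K(\cM^t)^{1/t}=\check\rho(\cM)$, which is precisely where the invariant \emph{embedded pair} enters. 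You instead prove $\check\rho(\cM)\leq\tilde\rho(\cM)$ directly and self-containedly: a point $x_0\in\inter K$ dominates the whole unit ball in the order $\leq_K$ after scaling, order-preservation under $K$-invariant matrices propagates this domination along any product $P_t$, and Lemma~\ref{l-convexcone} converts it into $\|P_t\|\leq (2\alpha_0 M/C)|P_t x_0|/|x_0|\lesssim\lambda^t$, so a single near-optimal orbit controls the operator norm. Two remarks. First, your argument actually uses less than the stated hypothesis: you only need $K$ to be invariant, proper, and with nonempty interior ($K'$ serves merely to guarantee $\inter K\neq\emptyset$), so you prove a slightly more general statement; the paper's route, in exchange, yields the computable lower bounds $\check\sigma_K(\cM^t)^{1/t}\leq\tilde\rho(\cM)$ as a byproduct, which the authors exploit algorithmically afterwards. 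Second, a small gloss in your easy direction: a single infinite sequence with $\|A_{\sigma^*(t)}\cdots A_{\sigma^*(0)}\|^{1/t}\to\check\rho(\cM)$ need not obviously exist, since $\check\rho$ is defined via $\lim_t\inf_\sigma$; the standard fix is to take a finite product $P\in\cM^{t_0}$ with $\|P\|^{1/t_0}\leq\check\rho(\cM)+\epsilon$ and repeat it periodically, which gives $\tilde\rho_x(\cM)\leq\check\rho(\cM)+\epsilon$ for every $x$ and hence $\tilde\rho(\cM)\leq\check\rho(\cM)$.
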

\begin{proof}
We make use of results from \cite{protasov-jungers-blondel09} for approximating the joint spectral subradius.  In that paper, it is shown that for any set of matrices $\cM$ with an invariant cone $K,$ the following quantity is a lower bound for $\check \rho(\cM):$

\begin{equation}\label{eq-sigma} \check\sigma_K (\cM) =  \sup \Bigl\{ \lambda  \geq
 0 \quad
\bigl| \quad \exists \ v  \geq_K 0 ,  v \ne 0\quad A v \geq_K \lambda
 v \quad \forall  A  \in  \cM \Bigr\}.\end{equation}

It turns out that $\check\sigma_K(\cM)$ is actually also a valid lower bound for $\tilde\rho(\cM).$
 Indeed, it is easy to see that if $ A v \geq_K \lambda v ,$ then $ BA v \geq_K \lambda
 Bv, $ provided that $B$ leaves $K$ invariant.  Thus, the existence of $v$ as in (\ref{eq-sigma}) for each $\lambda<\check\sigma_K(\cM)$ implies that $$Av \geq_K \lambda^t v,\quad \forall A\in \cM^t,$$ and thus, by Lemma~\ref{l-convexcone}, $|Av| \geq C\lambda^t |v|\quad \forall A\in \cM^t$, which implies $\robar(\cM) \geq \check\sigma_K(\cM)$. Similarly, one has that $\robar(\cM)\geq \check\sigma_K(\cM^t)^{1/t}.$

Now, if the set $\cM$ has an embedded pair of invariant cones, we have that (\cite[Corollary 3]{protasov-jungers-blondel09}) $$ \lim_{t\rightarrow \infty}\check\sigma_K(\cm^t)^{1/t}=\check\rho(\cM).$$ Combining the above equations, one gets $\robar(\cM)=\check\rho(\cM).$ \end{proof}

\begin{corollary}
If all the matrices in $\cM$ have only positive entries, then $\tilde\rho(\cM)=\check \rho (\cM).$
\end{corollary}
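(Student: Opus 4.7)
The plan is to deduce this corollary directly from the preceding theorem by exhibiting, for any compact $\cM$ of entrywise positive $d\times d$ matrices, an invariant embedded pair of proper closed convex cones.

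The natural outer cone is $K = \{x\in\re^d : x_i\geq 0 \text{ for } i=1,\ldots,d\}$. It is closed, convex, invariant under $\cM$ (since the matrices have nonnegative entries), and contained in the half-space $\{x\in\re^d : \sum_i x_i \geq 0\}$, hence proper in the sense of the paper.

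For the inner cone, I would exploit the strict positivity of the entries together with the compactness of $\cM$: the continuous maps $A\mapsto \min_{i,j}A_{ij}$ and $A\mapsto \max_{i,j}A_{ij}$ attain their extrema on $\cM$, yielding uniform constants $0<\epsilon\leq M$ with $\epsilon \leq A_{ij}\leq M$ for every $A\in \cM$ and every $i,j$. Setting $c=\epsilon/(dM)$, I define
\[K'=\Bigl\{x\in K : x_i\geq c\sum_{j=1}^d x_j \text{ for all } i\Bigr\}.\]
This is a closed convex subcone of $K$, hence proper; and any nonzero $x\in K'$ has all components strictly positive, so $K'\setminus\{0\}\subset \inter K$. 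A short direct verification handles invariance: for $A\in \cM$ and $x\in K'$,
\[(Ax)_i\geq \epsilon\sum_j x_j \quad\text{and}\quad \sum_i (Ax)_i\leq dM\sum_j x_j,\]
whence $(Ax)_i\geq c\sum_i (Ax)_i$, i.e.\ $Ax\in K'$.

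Applying the preceding theorem to the embedded invariant pair $\{K,K'\}$ then yields $\tilde\rho(\cM)=\check\rho(\cM)$. There is no real obstacle in this argument; the only subtle point is the reading of \emph{positive entries}: strict positivity is genuinely needed, since, for instance, the identity matrix has nonnegative entries yet admits no closed convex cone properly embedded in $K$ that is invariant under it. Under the natural reading of strict positivity, the construction above is immediate.
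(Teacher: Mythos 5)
Your argument is correct and follows the same strategy as the paper: reduce the corollary to the preceding theorem by exhibiting an invariant embedded pair of proper cones. The only difference is that the paper outsources the existence of such a pair to \cite[Corollary 1]{protasov-jungers-blondel09}, whereas you construct it explicitly, taking $K$ to be the nonnegative orthant and $K'=\bigl\{x\in K : x_i\geq c\sum_{j}x_j \ \forall i\bigr\}$ with $c=\epsilon/(dM)$, where $0<\epsilon\leq A_{ij}\leq M$ uniformly over $\cM$ by compactness and strict positivity. Your verifications (closed convex cones, $K$ contained in a half-space, $K'\setminus\{0\}\subset\inter K$, and the two-line invariance computation $(Ax)_i\geq\epsilon\sum_j x_j\geq c\sum_i(Ax)_i$) all check out, so your version has the merit of making the corollary self-contained. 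One inaccuracy in your closing remark: the identity matrix is \emph{not} a witness that mere nonnegativity fails, since any single ray through the interior of the orthant is a closed convex cone embedded in $K$ and invariant under the identity. A correct illustration that strict positivity cannot simply be dropped from the hypothesis of the theorem is, e.g., $\cM=\{\mathrm{diag}(2,1/2),\,\mathrm{diag}(1/2,2)\}$: these nonnegative matrices admit no common invariant cone embedded in the orthant, because the two maps push the ratio $x_2/x_1$ in opposite directions so that no sector $\{a x_1\leq x_2\leq b x_1\}$ with $0<a\leq b<\infty$ can be preserved by both. This does not affect the validity of your proof of the corollary itself.
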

\begin{proof}
As shown in \cite[Corollary 1]{protasov-jungers-blondel09}, sets of positive matrices share an invariant embedded pair of \pao{proper} cones.
\end{proof}

Thus, in the case where the matrices share an embedded pair of cones, one can use algorithms for providing a lower bound to the joint spectral subradius $\check \rho(\cM)$ in order to obtain a lower bound on $\tilde \rho(\cM).$  See for example \cite{protasov-jungers-blondel09,GP11} for methods that perform well in practice (even sometimes terminate in finite time).  

\section{Algorithms}
\label{sec-algo}
We explore here the algorithmic computation of $\tilde \rho$ and focus on the discrete-time setting.
\subsection{Complexity}

Let us show that one should not expect a polynomial time algorithm for the problem.  We add that the proofs in this subsection use the same technique as in \cite{jungers-traj}.
\begin{proposition}\label{prop-np}
Unless $P=NP,$ there is no polynomial time algorithm for solving the \pao{stabilizability} problem, even for nonnegative matrices.  More precisely, there is no algorithm that receives a pair of nonnegative integer matrices $\cM$, and that decides in polynomial time (w.r.t. the size and number of matrices) whether $\tilde\rho(\cM)<1.$
\end{proposition}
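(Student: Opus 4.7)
The plan is to leverage the corollary of the previous subsection, which guarantees that $\tilde\rho(\cM)=\check\rho(\cM)$ whenever $\cM$ consists of nonnegative matrices, and then exhibit a polynomial-time reduction from a canonical NP-complete problem to the decision problem ``$\check\rho(\cM)<1$'' restricted to nonnegative integer matrices. A polynomial-time algorithm for stabilizability would then immediately collapse to a polynomial-time algorithm for the NP-complete problem, forcing $P=NP$.

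My first step is to reformulate the subradius condition combinatorially. For any finite product $A_{\sigma(t)}\cdots A_{\sigma(1)}$ of nonnegative integer matrices, the product is itself a nonnegative integer matrix, so its norm is either $0$ or at least $1$; moreover, once a product vanishes so does every extension of it. Plugging these two facts into the definition of $\check\rho$ yields the dichotomy
\[\check\rho(\cM)<1 \quad\Longleftrightarrow\quad \exists\, t\in\n,\ \sigma \mbox{ such that } A_{\sigma(t)}\cdots A_{\sigma(1)}=0.\]
The right-hand side is precisely the \emph{matrix mortality} question for the class of nonnegative integer matrices, and it is enough to show that it is NP-hard.

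For the NP-hardness reduction I would follow the technique of~\cite{jungers-traj}, viewing each matrix $A_i\in\cM$ as the colored transition relation of a nondeterministic automaton: the product $A_{\sigma(t)}\cdots A_{\sigma(1)}$ then counts paths labeled by the color word $\sigma(1)\sigma(2)\cdots\sigma(t)$, and vanishes iff no such path exists at all. Starting from an instance $I$ of an appropriate NP-complete problem (3-SAT, or a graph problem of the kind used in \cite{jungers-traj}), I would design an automaton whose state set encodes the combinatorial structure of $I$ in polynomial size, arranged so that a color word kills every path exactly when the word encodes a valid certificate for $I$.

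The main obstacle lies precisely in engineering this last equivalence: the ``surviving'' paths must be in bijection with the possible failure modes of a candidate certificate, with no path lingering when the certificate is correct and no spurious coincidence that produces a zero product from an invalid certificate. Once such a reduction is in place, chaining it with the equivalence displayed above and with the corollary $\tilde\rho=\check\rho$ for nonnegative matrices concludes the argument.
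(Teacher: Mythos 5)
Your overall strategy---reduce from matrix mortality for nonnegative integer matrices---is the same as the paper's, but there are two genuine problems. First, the corollary you invoke states that $\tilde\rho(\cM)=\check\rho(\cM)$ when all matrices have \emph{strictly positive} entries, not merely nonnegative ones; it rests on the existence of an embedded pair of invariant cones, which is not available for nonnegative matrices with zero entries. Worse, a set of strictly positive matrices is never mortal (products of positive matrices are positive), so the corollary is vacuous in exactly the regime your reduction lives in. The fact you need is true, but it requires a separate one-line argument rather than an appeal to that corollary: a nonzero nonnegative integer product $A$ satisfies $|A\vectun|\geq 1$ with $\vectun=(1,\dots,1)^T$, so if $\cM$ is not mortal then $\tilde\rho_{\vectun}(\cM)\geq 1$ and hence $\tilde\rho(\cM)\geq 1$. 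This is essentially how the paper argues, except that it first rescales the matrices by $2$ so that the non-mortal case gives $\tilde\rho\geq 2$ (a robust gap, though not strictly needed to decide the strict inequality $\tilde\rho<1$).

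Second, and more seriously, you leave the NP-hardness of mortality for nonnegative integer matrices unproven: you sketch an automata-theoretic reduction from 3-SAT and then explicitly identify its key equivalence as an unresolved ``obstacle.'' That equivalence is the entire content of the hardness result, so as written your argument contains no complete lower bound. The paper sidesteps this by simply citing the known NP-hardness of mortality for nonnegative integer matrices due to Blondel and Tsitsiklis. If you cite that result, your proof closes (modulo the first point); if you intend to reprove it, the automaton construction must actually be carried out.
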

\begin{proof}
Our proof is by reduction from the {\textit{mortality problem}} which is known to be NP-hard, even for
nonnegative integer matrices \cite[p. 286]{blondel-mortal}.  In this problem, one is given a set of
matrices $\setmat,$ and it is asked whether there exists a product of matrices in \pao{$\setmat$} 
which is equal to the zero matrix.

We now construct an instance $\cM'$ such that $\tilde\rho (\cM')<1$ if and only if the set $\cM$ is mortal: simply take $\cM'=\{A'=2A\ |\ A\in
\cM\}.$ 

Suppose first that $\cM$ is mortal.  Then, the corresponding product in $\cM'$ is also equal to zero, and thus $\tilde \rho (\cM')=0.$ \\
If, on the other hand, $\cM$ is not mortal, then
\pao{all products of matrices in $\setmat$ have nonnegative integer entries, so that we have}
$$ \forall A \in \cM^t,\quad |A \vectun|\geq 1, $$ and thus,
$$ \forall A' \in \cM'^t,\quad |A' \vectun|\geq 2^t $$
(we recall that we note $\vectun=(1,\dots,1)^T$).  This implies that $\tilde \rho (\cM') \geq 2.$
\end{proof}

If one relaxes the requirement that the matrices and the vectors are nonnegative, then the problem
becomes even harder, and can be undecidable, as shown in the next proposition.
\begin{proposition}
The \pao{stabilizability} problem is undecidable.
\end{proposition}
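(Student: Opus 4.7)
The plan is to mirror the reduction in Proposition~\ref{prop-np}, but now exploiting the stronger fact that the mortality problem is undecidable (rather than merely NP-hard) once one allows matrices with negative integer entries, via a classical reduction from Turing machine halting. Given a finite set $\cM$ of integer matrices, I would aim to show that $\tilde\rho(\cM)<1$ if and only if $\cM$ is mortal. The forward direction is immediate: if some product of matrices in $\cM$ equals the zero matrix, applying that product once drives any initial state to the origin in finitely many steps, so $\tilde\rho(\cM)=0<1$.

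For the converse, suppose $\tilde\rho(\cM)<1$. By Proposition~\ref{equiv}, the system is pointwise stabilizable, so for every $x_0\in\mathbb{Z}^d\setminus\{0\}$ there is a switching sequence $\sigma$ with $x_{\sigma,x_0}(t)\to 0$. Since integer matrices act on integer vectors and every nonzero integer vector has Euclidean norm at least one, the trajectory must reach zero in finitely many steps; equivalently, every nonzero integer vector lies in the kernel of some finite product in the semigroup generated by $\cM$. It remains to conclude from this that the zero matrix itself belongs to that semigroup, i.e., that $\cM$ is mortal.

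The main obstacle is precisely this last step. In the nonnegative setting of Proposition~\ref{prop-np} the uniform witness $\vectun$ handled the job cleanly via the bound $|A\vectun|\geq 1$ for any nonzero product $A$, but cancellations rule out such a one-shot argument here. One must instead leverage the combinatorial structure of the finitely generated integer matrix semigroup, arguing that a semigroup of nonzero integer matrices cannot have its union of (proper) kernels cover all of $\mathbb{Z}^d$. I expect this is best approached not by proving a clean general lemma but by tailoring the construction directly to the matrices used in the undecidability proof of the mortality problem (e.g.\ Paterson-style encodings of Turing machines), where non-halting configurations produce explicit surviving integer witnesses $x_0$ for which every product keeps $|x_{\sigma,x_0}(t)|\geq 1$, thereby certifying $\tilde\rho(\cM)\geq 1$ in the non-mortal case.
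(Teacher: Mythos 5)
You have correctly located the crux: in the non-mortal case one must exhibit an integer vector that no product in the semigroup annihilates, and the one-shot witness argument of Proposition~\ref{prop-np} does not survive the passage to signed entries. But your proposal stops exactly at that point, and the step you defer is a genuine gap, not a technicality. Non-mortality only tells you that every product is a nonzero matrix, i.e.\ that each product has a \emph{proper} kernel; it does not rule out that the countably many kernels of all products jointly cover $\mathbb{Z}^d$ (a countable union of proper rational hyperplanes can cover all of $\mathbb{Q}^d$, so no soft dimension or Baire-type argument is available). Outsourcing this to ``the combinatorial structure of the Paterson-style encoding'' is not a proof: it would require reopening and modifying the undecidability proof of mortality, and you give no argument that the required surviving integer witnesses actually exist in that construction.

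The paper closes the gap by modifying the \emph{instance} rather than hunting for a witness inside the original one: each $A\in\cM$ of size $n$ is replaced by the block-diagonal matrix $\mathrm{diag}(2A,\dots,2A)$ with $n$ copies, of size $n^2$. A length-$t$ product of these equals $\mathrm{diag}(2^tA',\dots,2^tA')$ with $A'\in\cM^t$; if $\cM$ is not mortal then $A'$ has a nonzero column, say the $i$th, so $|2^tA'e_i|\geq 2^t$ because the entries are integers. Consequently the single concatenated vector $v=(e_1,\dots,e_n)$ satisfies $|A''v|\geq 2^t$ for \emph{every} product $A''$ of the new matrices, whence $\tilde\rho\geq 2$; the block structure lets one test all $n$ candidate witnesses simultaneously with one vector, which is precisely the uniformity your approach lacks. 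In the mortal case the construction still gives $\tilde\rho=0$, so the reduction is complete. I would encourage you to adopt this amplification step; the rest of your argument then goes through unchanged.
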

\begin{proof}
It is known that the mortality problem with matrices having entries in $\z$ is undecidable \cite[Corollary
2.1]{jungers_lncis}.  We reduce this problem to the \pao{stabilizability} problem in a way similar as in Proposition
\ref{prop-np}, except that we build larger matrices of size $n^2$ (where $n$ is the size of the initial matrices).  The matrices in the set $\cM'$ are of the shape $\cM'=\{\mbox{diag}(2A,\dots,2A)\ |\ A \in \cM\},$ with $n$ copies of the same matrix on the diagonal. 

Now if $\cM$ is mortal, again $\tilde \rho = 0.$\\
If, on the other hand, $\cM$ is not mortal, then every product $A$ of matrices in $\cM$ has a nonzero column (say, the $i$th one), and thus, the product $A\vectun_i$ has a norm larger than one, where $\vectun_i$ is the $i$th standard basis vector (recall that the entries have integer values).  Thus, defining \pao{$v=(e_1,\dots, e_n)$} (that is, the concatenation of the $n$ standard basis vectors), for any product $A'$ in $\cM'^t,$ we have that $|A'v|\geq 2^t,$ and $\tilde\rho(\cM')\geq2.$
\end{proof}
\subsection{Upper bound}
In this section we provide an algorithm, which not only derives successive upper bounds on $\tilde \rho,$ but also has the property that these bounds asymptotically converge to the true value $\tilde \rho.$ Note that algorithms with such a property were recently proposed in \cite{zhang2009exponential,lee-dul2011}.   \rmj{Our algorithm is more elementary, and this might be a good property for scaling issues.  However, each iteration is very simple, and it is expected that for practical applications, where only a few iterations are done, other methods, relying on powerful optimization technologies like Semidefinite Programming, will outperform the technique developed here.}  We then describe how to adapt the algorithm in order to make it even more scalable, at the price of loosing the guarantee of convergence. Our algorithm is based on the following \rmj{simple proposition:
\begin{proposition}
If $\mathcal{M}$ is pointwise stabilizable then for any $\varepsilon>0$ there exists a large enough time  $t_\varepsilon$ such that  every point of the unit ball can be mapped at time $t_\varepsilon$, with a suitable feedback switching strategy, into the ball centered at the origin and of radius $\varepsilon$. 
\end{proposition}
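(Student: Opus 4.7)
The plan is to recognize that the statement is essentially a quantitative restatement of uniform feedback stabilizability, which has already been shown to be equivalent to pointwise stabilizability. So the proof reduces to a direct application of Corollary~\ref{cor-feedback-pointwise}.

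First, I would invoke Corollary~\ref{cor-feedback-pointwise}: since $\mathcal{M}$ is assumed pointwise stabilizable, it is also uniformly feedback stabilizable. By Definition~\ref{def-pointwise}, this yields a $0$-homogeneous feedback $\sigma:\mathbb{R}^d\to \cM$ with the property that for any positive $\epsilon_1,\epsilon_2$ there exists $T\in\cN$ such that $|x_0|<\epsilon_1$ implies $|x(t)|<\epsilon_2$ for all $t\geq T$, where $x(\cdot)$ is the corresponding closed-loop trajectory starting at $x_0$.

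Next, given $\varepsilon>0$, I would simply apply this property with $\epsilon_1=2$ (any number strictly greater than $1$ works, so that the closed unit ball is contained in the open ball of radius $\epsilon_1$) and $\epsilon_2=\varepsilon$. Setting $t_\varepsilon:=T$, every point $x_0$ of the unit ball is mapped, under the feedback strategy $\sigma$, into the ball of radius $\varepsilon$ at time $t_\varepsilon$, which is exactly the conclusion.

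There is no real obstacle here: the proposition is a direct consequence of the chain of equivalences already built in Section~\ref{sec-properties} and Section~\ref{sec-control}, and the only minor point is the open/closed ball technicality handled by enlarging $\epsilon_1$. If one instead wanted a more explicit (and quantitative) estimate of $t_\varepsilon$, one could alternatively use the Lipschitz control-Lyapunov function $V_\lambda$ from Proposition~\ref{Vlam}, for any fixed $\lambda\in(\tilde\rho(\cM),1)$: combining $V_\lambda(x_{\sigma_x,x}(t))\leq \lambda^t V_\lambda(x)$ with the bounds $m|\cdot|\leq V_\lambda\leq M|\cdot|$ gives $|x(t)|\leq (M/m)\lambda^t$ for every $x$ in the unit ball, so any $t_\varepsilon$ with $(M/m)\lambda^{t_\varepsilon}<\varepsilon$ works, which yields $t_\varepsilon=O(\log(1/\varepsilon))$.
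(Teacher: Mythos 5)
Your argument is correct, but it takes a different (and heavier) route than the paper. The paper's own proof is a two-line computation directly from Proposition~\ref{equiv} and the definition of $\tilde{\rho}'$: pointwise stabilizability gives $\tilde{\rho}'(\cM)<1$, hence a uniform $M>0$ and $\lambda<1$ with $|x_{\sigma,x}(t)|\leq M\lambda^t|x|$ for every $x$ and some $x$-dependent signal $\sigma$, and one simply takes $t_\varepsilon\geq \ln(\varepsilon/M)/\ln\lambda$. You instead route the argument through Corollary~\ref{cor-feedback-pointwise}, i.e.\ through the converse Lyapunov theorem (Proposition~\ref{Vlam}) and Proposition~\ref{Lyap}; this is logically sound and non-circular (the corollary is established earlier and does not depend on the present proposition), and it even yields something slightly stronger, namely a single $0$-homogeneous feedback achieving the uniform contraction, rather than an initial-point-dependent open-loop signal. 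The trade-off is that you invoke substantially more machinery for a statement that follows from the bare definition of $\tilde{\rho}'$; your closing alternative via $V_\lambda$, giving $t_\varepsilon=O(\log(1/\varepsilon))$, is essentially the quantitative content of the paper's direct computation, just filtered through the Lyapunov function instead of the constant $M$ in the definition of $\tilde{\rho}'$.
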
}
\begin{proof}
Since $\tilde{\rho}'<1$ there exists $\lambda<1$ and $M>0$ such that  $|x_{\sigma,x}(t)|<M \lambda^t |x|$ for any $x\in \mathbb{R}^d$ and some $\sigma(\cdot)$ depending a priori on $x$. It is thus enough to take $t\geq \frac{\ln (\varepsilon/M)}{\ln \lambda}$.
\end{proof}
This proposition allows us to propose an asymptotically tight upper bound on the \rhoprime.  In other words, we provide a sequence of sufficient conditions for stabilizability, which are asymptotically necessary.  So, the theoretical efficiency of our method is similar to the one from \cite{zhang2009exponential} (see the introduction for more details). We think that the simplicity of the method (compute products of increasing length t and iterate) offers a valuable alternative to \cite{zhang2009exponential}, which uses more complex optimization methods.  Also, we think that this approach bears advantages in comparison with \cite{geromel2006stability} in that one has a clear procedure to follow, which is guaranteed to terminate after a finite amount of time if the system is stabilizable.  On the contrary, the matrix inequalities proposed in \cite{geromel2006stability} are non-convex, and thus there is no clear procedure to efficiently find the optimal upper bound provided by these inequalities.  On top of that, in \cite{geromel2006stability}, if the optimal upper bound is found, one has no guarantee that this bound is close to the actual value of $\tilde \rho,$ while our Algorithm \ref{algo-upperbound} is guaranteed to converge asymptotically.  \rmj{However, as said above, for a fixed number of iterations (or, say, when computational time is a strong constraint), it is expectable that the previous algorithms, which are based on more involved optimization techniques, outperform the one presented here.}

\begin{algorithm}\label{algo-upperbound}

\KwData{A set of matrices $\cM$ and an integer $T$}
\KwResult{Outputs a sequence of upper bounds $r_t,$ $t=1,\dots, T$\;
\Begin{

\nl $t:=0$\;
\nl \While{$t\leq T$} {
\nl $t:=t+1$\;
\nl Compute $\cM^t$ \;
\nl Minimize $\gamma$ such that
\begin{equation}\label{probl-upperbound} \forall x : |x|=1, \, \exists A \in \cM^t\ s.t. |Ax|\leq \gamma\end{equation}
\nl Output $r_t:=\gamma$\;}}}
\caption{An asymptotically tight algorithm}
\end{algorithm}

\begin{theorem}\label{thm-algo-upperbound}
Algorithm \ref{algo-upperbound} iteratively provides an upper bound $r_t\geq \tilde \rho$ which asymptotically converges towards $\tilde \rho.$ 
\end{theorem}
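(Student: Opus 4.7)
The plan is to interpret the output of Algorithm~\ref{algo-upperbound} as a per-step contraction rate, i.e.\ to read the line ``Output $r_t:=\gamma$'' as $r_t:=\gamma^{1/t}$, so that
$r_t^t=\sup_{|x|=1}\inf_{A\in\cM^t}|Ax|$.
Once this is fixed, the theorem splits into the two inequalities $r_t\ge\tilde\rho$ for every $t$, and $\limsup_{t\to\infty}r_t\le\tilde\rho$, which I would establish separately.

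For the bound $r_t\ge\tilde\rho$, I would exhibit, for an arbitrary $x_0\in\re^d\setminus\{0\}$, a switching signal whose trajectory decays at rate at most $r_t$. Partition $\mathbb{N}$ into blocks of length $t$: starting with $y_0=x_0$, at the beginning of the $k$-th block pick $B_k\in\cM^t$ realizing $|B_k(y_k/|y_k|)|\le r_t^t$ (feasible by definition of $r_t$) and apply the $t$ constituent matrices of $B_k$ in order. This yields $|y_{k+1}|\le r_t^t|y_k|$ and, by induction, $|y_k|\le r_t^{tk}|x_0|$. The intermediate states within a block are controlled by $|x_{\sigma,x_0}(s)|\le C\,r_t^{s-t}|x_0|$ where $C=\max_{0\le\ell<t}\sup_{A\in\cM^\ell}\|A\|$ is finite by compactness of $\cM$ (Assumption~\ref{assum}). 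This produces an admissible switching signal certifying $\tilde\rho\le r_t$.

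For $\limsup r_t\le\tilde\rho$, the main tool is the identity $\tilde\rho=\tilde\rho'$ recorded in the remark following Proposition~\ref{equiv}. Fix any $\lambda>\tilde\rho$; by the very definition of $\tilde\rho'$ there exists a \emph{uniform} constant $M>0$ such that, for every unit vector $x$, some switching signal $\sigma_x$ satisfies $|x_{\sigma_x,x}(t)|\le M\lambda^t$ for all $t\in\mathbb{N}$. Taking the product of the first $t$ matrices selected by $\sigma_x$ produces a matrix in $\cM^t$ contracting $x$ by at most $M\lambda^t$, so $\gamma=M\lambda^t$ is feasible in~\eqref{probl-upperbound}. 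This gives $r_t^t\le M\lambda^t$ and hence $r_t\le M^{1/t}\lambda\to\lambda$ as $t\to\infty$. Since $\lambda>\tilde\rho$ is arbitrary, $\limsup_{t\to\infty}r_t\le\tilde\rho$, and combining with the first step yields $\lim_{t\to\infty}r_t=\tilde\rho$.

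The main obstacle is conceptual rather than technical: the uniformity of $M$ is essential in the convergence step. If one only invoked the definition of $\tilde\rho$ itself, the constant $M$ could depend on $x$, and one would not be able to bound the infimum over $A\in\cM^t$ of $|Ax|$ uniformly in $x$ on the unit sphere -- exactly what $r_t$ requires. It is precisely here that the equality $\tilde\rho=\tilde\rho'$ (obtained from Proposition~\ref{equiv} via Lemma~\ref{lem-basic}) does the heavy lifting, turning a pointwise stabilizability statement into a uniform one usable by the min-max formulation of the algorithm.
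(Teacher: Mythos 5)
Your proof is correct, and it is in fact considerably more complete than the one printed in the paper. The paper's proof of Theorem~\ref{thm-algo-upperbound} consists of a single observation about computational realizability: problem \eqref{probl-upperbound} can be solved by bisection on $\gamma$ because, for fixed $\gamma$, the constraints are algebraic and hence decidable by Tarski--Seidenberg quantifier elimination. The two mathematical claims of the theorem --- that $r_t\ge\tilde\rho$ and that $r_t\to\tilde\rho$ --- are left implicit there, to be extracted from the proposition stated just before the algorithm (every point of the unit ball can be steered into an $\varepsilon$-ball in a uniform time) and from Proposition~\ref{upper-improve}, whose block-concatenation argument you have essentially rediscovered, specialized to a fixed block length $t$. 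Your treatment of the convergence direction is the right one: the passage from the pointwise definition of $\tilde\rho$ to a uniform constant $M$ via the identity $\tilde\rho=\tilde\rho'$ is precisely what makes the min--max quantity $\sup_{|x|=1}\inf_{A\in\cM^t}|Ax|$ controllable, and your normalization $r_t=\gamma^{1/t}$ is the only reading under which the statement can hold (with $r_t=\gamma$ the output would tend to $0$ or $\infty$ rather than to $\tilde\rho$). Two minor remarks: in the intermediate-state bound the factor $r_t^{s-t}$ should be replaced by $\max\{1,r_t^{-t}\}\,r_t^{s}$ so as to cover the case $r_t>1$, exactly as in the proof of Proposition~\ref{upper-improve}; and if one reads ``provides'' as asserting an effective procedure, you should append the paper's one-line argument that each instance of \eqref{probl-upperbound} is decidable.
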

\begin{proof}
At every step of the algorithm, one must solve the optimization problem \ref{probl-upperbound}, which can be solved by bisection on $\gamma.$  Indeed, for a fixed $\gamma,$ the (in)equalities in Problem \eqref{probl-upperbound} are algebraic, and can be solved with standard procedures, like Tarski-Seidenberg elimination \cite{Tarski_quantifier_elim,Seidenberg_quantifier_elim}
\end{proof}
\begin{remark}
Theorem \ref{thm-algo-upperbound} provides a theoretical procedure that asymptotically converges towards the true value.  It is well known that algebraic equation solvers are very slow in practice, so, practical implementations of algorithm \ref{thm-algo-upperbound} would advantageously make use of heuristics in order to approximately solve \eqref{probl-upperbound}.
\end{remark}
We conclude this section by establishing a result that will be applied in the next section and which \colp{provides an alternative way to estimate the stabilization radius.}
\begin{proposition}\label{upper-improve}
Let $\mu>0$ and $\bar t\in \mathbb{N}$. Suppose that for every $z\in \mathbb{R}^2$ of norm 1 there exists a product $A_z\in \cM^{t_z}$ for some $t_z\leq \bar t$ such that $|A_z z|^{1/t_z}\leq \mu,$ then, $$\tilde{\rho}(\cM)\leq \mu.$$\end{proposition}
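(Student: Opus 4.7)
The plan is to bootstrap the hypothesis, which is local (it gives only one product per unit vector), into a global exponential bound on trajectories. Specifically, for every $x_0\neq 0$ I will build a switching signal by concatenating blocks whose lengths are prescribed by the hypothesis, and prove that the resulting trajectory satisfies $|x_{\sigma,x_0}(t)|\leq C\,\mu^t |x_0|$ for some constant $C$ independent of $t$. This yields $\tilde{\rho}_{x_0}(\cM)\leq \mu$ for every $x_0$, and hence $\tilde{\rho}(\cM)\leq \mu$.

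Concretely, given $x_0\neq 0$, I would set $y_0=x_0$, $z_0 = y_0/|y_0|$, and apply the hypothesis to select a product $A_{z_0}\in\cM^{t_{z_0}}$ with $t_{z_0}\leq \bar t$ satisfying $|A_{z_0} z_0|\leq \mu^{t_{z_0}}$. This prescribes the switching signal on the first $t_{z_0}$ steps, after which I define $y_1=A_{z_0}y_0$, so that $|y_1|\leq \mu^{t_{z_0}} |y_0|$. Iterating with $z_1=y_1/|y_1|$ (and, in the degenerate case $y_n=0$, extending the signal arbitrarily while keeping $x(\cdot)\equiv 0$), after $n$ blocks I obtain a trajectory defined on $[0,T_n]$ with $T_n=\sum_{i=0}^{n-1}t_{z_i}$ and $|x(T_n)|=|y_n|\leq \mu^{T_n}|x_0|$. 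Since each $t_{z_i}\geq 1$, the times $T_n$ go to infinity, so the switching signal is well defined on all of $\mathbb{N}$.

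The only remaining point is to control the norm at times that do not belong to $\{T_n\}$. By Assumption~\ref{assum} the constant $K=\max_{A\in\cM}\|A\|$ is finite, and for $t\in[T_n,T_{n+1}]$ one has $|x(t)|\leq K^{t-T_n}|x(T_n)|\leq K^{\bar t} \mu^{T_n}|x_0|$, since $t-T_n\leq t_{z_n}\leq \bar t$. Combining $t-\bar t\leq T_n\leq t$ with an elementary case split on whether $\mu\leq 1$ or $\mu> 1$ gives $\mu^{T_n}\leq \max(1,\mu^{-\bar t})\,\mu^t$, hence $|x(t)|\leq K^{\bar t}\max(1,\mu^{-\bar t})\,\mu^t|x_0|$, which is a uniform exponential bound of rate $\mu$.

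The argument is essentially mechanical and I do not anticipate a serious obstacle; the crucial step is the interpolation between the block endpoints $T_n$, which works precisely because $\bar t$ is a uniform upper bound on the block lengths and $\cM$ is compact. Without the uniform bound $\bar t$, the intermediate norm could blow up within a single block, and the argument would break down.
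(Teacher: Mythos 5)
Your proof is correct and follows essentially the same route as the paper's: concatenate the products $A_{z_i}$ prescribed by the hypothesis at the successive renormalized states, obtain the decay $|x(T_n)|\leq\mu^{T_n}|x_0|$ at the block endpoints, and control the intermediate times via the uniform bound on $\|A\|$ over $\cup_{t\leq\bar t}\cM^t$ coming from compactness, which produces the same constant $\max\{1,\mu^{-\bar t}\}$ times a power of the norm bound. Your treatment is in fact slightly more careful than the paper's (explicit handling of the degenerate case $y_n=0$ and of the case split $\mu\lessgtr 1$), but there is no substantive difference.
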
 
\begin{proof}Let us extend $A_z,t_z$ by homogeneity: $A_z:=A_{z/|z|},t_z=t_{z/|z|}$. Then for any $z_0\in \mathbb{R}^2$ we can construct a trajectory $z(\cdot)$ by concatenation as follows 
$\dots A^{(k)}A^{(k-1)}\dots A^{(1)}z_0$, where we define recursively starting from $i=0$ the matrices $A^{(i+1)}=A_{z_i}$
, with $z_i=A^{(i)}\dots A^{(1)}z_0$. In particular $|z_i|\leq \mu^T |z_0|$ where $T=\sum_{j=0}^{i-1} t_{z_j}$ is the total time employed to attain $z_i$ from $z_0$. By compactness of $\cM$ we have that $\|M\|$ is uniformly bounded on $\cup_{t\leq \bar t} \cM^t$ by a constant $C>0$, from which we deduce that $|z(t)|\leq C\max\{1,\mu^{-\bar t}\}\mu^t |z_0|$. The thesis follows. 
\end{proof}

\subsection{Lower bound}
\label{s-lb}
In the previous section we have presented an algorithm providing an upper bound on $\tilde\rho(\cM)$. The lower bound appears to be much more complicated to analyze numerically, and the purpose of  this section is to provide an analytic criterion to estimate it. This is, to our knowledge, the first algorithm providing a lower bound on $\tilde \rho(\cM)$ in the literature.

\begin{theorem}\label{lem-sing}
One has $\tilde\rho(\cM)\geq \min_{A\in\mathcal{M}}\sigma_{m}(A)$ where $\sigma_{m}(A)$ is the smallest singular value of the matrix $A$.
\end{theorem}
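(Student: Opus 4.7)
The plan is to exploit the defining property of the smallest singular value, namely $|Ax| \geq \sigma_m(A) |x|$ for every $x \in \mathbb{R}^d$, and to show that this produces a uniform lower bound on the growth of every trajectory, regardless of the switching signal chosen.

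First, I would set $\mu := \min_{A \in \cM} \sigma_m(A)$ (the minimum exists since $\cM$ is compact and $\sigma_m$ is continuous). Then, by submultiplicativity of singular value bounds, for any switching sequence $\sigma$, any initial condition $x_0$, and any $t \in \mathbb{N}$, I would chain together the inequalities
\begin{equation*}
|x_{\sigma,x_0}(t)| \;=\; |A_{\sigma(t-1)} \cdots A_{\sigma(0)} x_0| \;\geq\; \prod_{s=0}^{t-1} \sigma_m(A_{\sigma(s)}) \cdot |x_0| \;\geq\; \mu^t |x_0|.
\end{equation*}

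Next, I would argue by contradiction: suppose $\tilde\rho(\cM) < \mu$. Pick any nonzero $x_0$; then $\tilde\rho_{x_0}(\cM) \leq \tilde\rho(\cM) < \mu$, so by definition of $\tilde\rho_{x_0}$ there exist $\lambda \in (\tilde\rho_{x_0}(\cM), \mu)$, a switching sequence $\sigma$, and a constant $M > 0$ such that $|x_{\sigma,x_0}(t)| \leq M \lambda^t |x_0|$ for all $t$. Combined with the singular value lower bound above, this yields $\mu^t |x_0| \leq M \lambda^t |x_0|$, i.e., $(\mu/\lambda)^t \leq M$ for all $t \in \mathbb{N}$. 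Since $\mu/\lambda > 1$, this is impossible for $t$ sufficiently large, producing the contradiction and establishing $\tilde\rho(\cM) \geq \mu$.

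There is no real obstacle here: the argument is essentially a one-liner once one recognizes that the smallest singular value provides a uniform lower bound on how much any matrix of $\cM$ can contract a vector, and that this bound survives arbitrary products, hence cannot be beaten by any feedback strategy. The only minor care needed is to keep track of the order of quantifiers (the switching signal may depend on $x_0$), but the pointwise bound $|Ax| \geq \sigma_m(A)|x|$ holds for every $x$, so this causes no issue.
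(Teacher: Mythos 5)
Your proof is correct and follows essentially the same route as the paper's: both arguments rest on the pointwise bound $|Ax|\geq\sigma_m(A)|x|$, chained over products to give $|x_{\sigma,x_0}(t)|\geq\mu^t|x_0|$ for every switching signal, and then compare this against the upper bound implicit in the definition of the stabilization radius (the paper phrases it via $\tilde\rho'$ and a direct inequality, you via $\tilde\rho_{x_0}$ and a contradiction, which is an immaterial difference).
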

\begin{proof}
Recall that $\sigma_{m}(A)$, the square root of the smallest  eigenvalue of the non-negative definite symmetric matrix $A^TA$, satisfies 
\[\sigma_{m}(A)=\min_{x\in\mathbb{R}^d\setminus\{0\}}\frac{|Ax|}{|x|}. \]
We thus have 
\[\frac{|A_{\sigma(t)}\dots A_{\sigma(1)}x|}{|x|} = \frac{|A_{\sigma(t)}\dots A_{\sigma(1)}x|}{|A_{\sigma(t-1)}\dots A_{\sigma(1)}x|}\dots  \frac{|A_{\sigma(1)}x|}{|x|}\geq \left(\min_{A\in\mathcal{M}}\sigma_{m}(A)\right)^t\]
for any $\sigma(\cdot)$.
Thus, by definition of $\tilde{\rho}'(\cM)=\tilde{\rho}(\cM)$ for any $\lambda>\tilde{\rho}(\cM)$ there exists $M>0$ such that $\frac{\lambda^t}{M}> \left(\min_{A\in\mathcal{M}}\sigma_{m}(A)\right)^t$ for every $t>0$, that is $\lambda\geq \min_{A\in\mathcal{M}}\sigma_{m}(A)$, proving the thesis. 
\end{proof}
One can iterate this lower bound for longer and longer products of matrices in $\cM,$ thanks to Lemma \ref{lem-basic}.  It can be seen on simple examples (even on a single matrix) that this can improve the lower bound. 
Unfortunately,  
the method may also fail to converge to the true value of $\tilde\rho(\cM).$ \pao{Indeed, in the proof of the theorem above, we could actually replace $\tilde{\rho}(\cM)$ by $\tilde{\rho}_x(\cM)$. In other words, $\min_{x\in\mathbb{R}^d\setminus\{0\}}\tilde\rho_x(\cM)\geq \min_{A\in\mathcal{M}}\sigma_{m}(A)$, but the left-hand side of the previous equality coincides with $\tilde\rho(\cM)$ only if $\tilde\rho_x(\cM)$ is constant outside the origin. This is not the case for instance if $\cM$ is given by a single matrix with at least two eigenvalues with different modulus.}

\section{A case study: the Stanford--Urbano example}
\label{stanford}
In this section we return to  Example \ref{ex-urbano}, and apply our results.  We show that already for this simple example, many open questions remain.

We will see in particular that although numerical algorithms allow to compute upper bounds for the stabilizability radius, its computation is complicated by the fact that  lower bounds appear to be much more difficult to obtain. Indeed,  a simple application of \pao{Theorem}~\ref{lem-sing} to the matrices from Example \ref{ex-urbano} gives $\tilde \rho(\cM) \geq 1/2$, and applying the same \pao{result to $\cM^t$, together with Item~$(ii)$ in Lemma~\ref{lem-basic},}  does not improve the bound. Worse than that, we will show that $\tilde\rho_z(\cM)=\frac12$ for $z$ belonging to a dense subset of $\mathbb{R}^2$, which suggests that improvements of the trivial lower bound could possibly be obtained only with much more subtle techniques.

Before reworking Example~\ref{ex-urbano}, let us consider the even simpler example given by 
\[\bar{\mathcal{M}}=\{\bar{A}_1,\bar{A}_2\}\quad \mbox{with }\bar{A}_1=A_1^2=\left(\begin{array}{cc} 0 & 1 \\ -1 & 0 \end{array}\right)\quad\mbox{ and }\quad\bar{A}_2=A_2.\]
This set of matrices has some interesting properties.  
\begin{proposition} 
The index $\tilde{\rho}_z(\bar\cM)$ is equal to one everywhere except on the principal axes, where $\tilde{\rho}_z(\bar\cM)=\frac12$.\end{proposition}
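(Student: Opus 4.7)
The plan is to rely on a single conserved quantity under the two generators, together with the lower bound from Theorem~\ref{lem-sing}, to split the plane into the two regimes predicted by the statement. First I would observe that $\bar A_1$ is an orthogonal matrix, hence $\sigma_m(\bar A_1)=1$, while $\sigma_m(\bar A_2)=\tfrac12$. By the inequality established in the proof of Theorem~\ref{lem-sing},
\[
\frac{|A_{\sigma(t)}\cdots A_{\sigma(1)}x|}{|x|}\geq \bigl(\min_{A\in\bar\cM}\sigma_m(A)\bigr)^t = \Bigl(\frac12\Bigr)^t
\]
for \emph{every} $x\ne 0$ and every switching signal $\sigma(\cdot)$, which immediately yields the pointwise lower bound $\tilde\rho_z(\bar\cM)\geq \tfrac12$ on all of $\mathbb R^2\setminus\{0\}$.

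The core tool I want to introduce is the function $h(x)=|x_1 x_2|$. A direct verification gives $h(\bar A_1 x)=|x_2\cdot(-x_1)|=h(x)$ and $h(\bar A_2 x)=|(x_1/2)(2x_2)|=h(x)$, so $h$ is invariant along every trajectory of the switched system, regardless of the switching law. This single algebraic identity does all the work in separating the two regimes, since $h(z)=0$ characterizes exactly the principal axes.

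For $z$ on the principal axes I would handle the upper bound by an explicit feedback: if $z=(a,0)$, apply $\bar A_2$ at every step, so that $x(t)=(a/2^t,0)$ and $|x(t)|=(1/2)^t|z|$, giving $\tilde\rho_z(\bar\cM)\leq\tfrac12$; if $z=(0,b)$, apply $\bar A_1$ once to land on the $x_1$-axis without changing the norm, then iterate $\bar A_2$, obtaining $|x(t)|\leq 2(1/2)^t|z|$, which again yields $\tilde\rho_z(\bar\cM)\leq\tfrac12$. Combined with the uniform lower bound $\tfrac12$ from the previous paragraph, this settles the axis case.

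For $z$ off the principal axes, the invariance of $h$ forces $h(x(t))=h(z)>0$ for every $t$ and every choice of $\sigma(\cdot)$; using $|x|^2=x_1^2+x_2^2\geq 2|x_1 x_2|=2h(x)$ gives the a priori bound $|x(t)|\geq \sqrt{2h(z)}>0$. Comparing with the defining inequality $|x_{\sigma,z}(t)|\leq M\lambda^t|z|$ forces $\lambda\geq 1$, hence $\tilde\rho_z(\bar\cM)\geq 1$. The matching upper bound $\tilde\rho_z(\bar\cM)\leq 1$ comes from the trivial strategy of applying $\bar A_1$ at every step, which preserves the norm. I expect the only real step requiring any thought to be the discovery of the invariant $h$; everything downstream is routine. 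No subtlety about going from pointwise trajectories to a $0$-homogeneous feedback is needed here, since the statement is stated in terms of $\tilde\rho_z$ alone.
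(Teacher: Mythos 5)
Your proof is correct, and it takes a genuinely different route from the paper's. The paper argues algebraically at the level of the matrix semigroup: using the relations $\bar A_1^4=\mathrm{Id}$ and $\bar A_2\bar A_1\bar A_2=\bar A_1$, it shows that every product in $\bar\cM^t$ collapses to the canonical form $\bar A_1^{t_3}\bar A_2^{t_2}\bar A_1^{t_1}$ with $t_1,t_3\in\{0,1,2,3\}$, and then checks that $|\bar A_2^{t_2}z_0|$ and $|\bar A_2^{t_2}\bar A_1 z_0|$ stay bounded away from zero when $z_0$ is off the axes. You instead exhibit the conserved quantity $h(x)=|x_1x_2|$, invariant under both generators, and use $|x|^2\geq 2h(x)$ to bound every trajectory below by $\sqrt{2h(z)}>0$ off the axes; the axis case and the uniform lower bound $\tilde\rho_z\geq\tfrac12$ via $\min_A\sigma_m(A)$ are handled exactly as one would expect (and the paper itself notes, after Theorem~\ref{lem-sing}, that its singular-value bound is really a bound on $\tilde\rho_x$ for every $x$, so your use of it pointwise is legitimate). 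All your computations check out: $h$ is indeed preserved by both $\bar A_1$ and $\bar A_2$, and the constants $M=1$ (resp.\ $M=2$) for the $x_1$-axis (resp.\ $x_2$-axis) are right. Your argument is shorter and more quantitative — it gives the explicit lower bound $|x(t)|\geq\sqrt{2h(z)}$ for free — while the paper's semigroup reduction exposes more of the algebraic structure that it then reuses in the discussion of the genuine Stanford--Urbano matrices. It is worth being aware that your invariant is special to $\bar A_1$ being a quarter-turn composed with the reciprocal-diagonal $\bar A_2$: no such conserved quantity survives for the original pair $\{A_1,A_2\}$ of Example~\ref{ex-urbano}, which is precisely why that case is so much harder.
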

\begin{proof}One can check that $\bar{A}_1^4=\mathrm{Id}$ (so that $\bar{A}_1^{-1}=\bar{A}_1^3$), $\bar{A}_2\bar{A}_1\bar{A}_2=\bar{A}_1$ (in particular $\bar{A}_2^{-1}=\bar{A}_1^3\bar{A}_2\bar{A}_1$), and these conditions imply that any product in $\tilde{\cM}^t$ coincides with a product of the form $\bar{A}_1^{t_3}\bar{A}_2^{t_2}\bar{A}_1^{t_1}$  for some $ t_1,t_3\in \{0,1,2,3\}$ and $t_2\geq 0$ (and $t_1+t_2+t_3\leq t$). As a consequence of this fact it is easy to see that the corresponding switched system is not stabilizable, unless the initial point belongs to one of the principal axes. Indeed one has that  $|\bar{A}_1^{t_3}\bar{A}_2^{t_2}\bar{A}_1^{t_1}z_0|$ is either equal to $|\bar{A}_2^{t_2}z_0|$ or to $|\bar{A}_2^{t_2}\bar{A}_1z_0|$. If $z_0$ does not belong to the $x_1$ axis then the trajectory $t\mapsto \bar{A}_2^t z_0$ diverges and thus $|\bar{A}_2^{t_2}z_0|\geq C_0$ for any $t_2\geq 0$ where $C_0>0$ is a given constant depending on $z_0$. Similarly  if $z_0$ does not belong to the $x_2$ axis then the trajectory $t\mapsto \bar{A}_2^t \bar{A}_1z_0$ diverges and $|\bar{A}_2^{t_2}\bar{A}_1z_0|\geq C_0$ \pao{for any $t\geq 0$,} for some $C_0>0$.
\pao{We deduce that any trajectory starting from a point $z_0$ outside the principal axes is bounded  from below and does not converge to the origin. Since obviously $\tilde{\rho}(\bar\cM)\leq 1$, this concludes the proof of the proposition.}
\end{proof}

The system in Example \ref{ex-urbano} bears some similarities with the previous one although the dependence of $\tilde{\rho}_z(\cM)$ with respect to $z$ appears to be much more complicated and we will only be able to describe it partially. 
In order to derive a better upper bound on $\tilde\rho(\cM)$ than the one already obtained in Example~\ref{ex-urbano} we use Proposition~\ref{upper-improve}. In particular, by  taking  $\bar t=9$ as maximal value for the times $t_z$ in  Proposition~\ref{upper-improve} we now improve the upper bound to $\tilde \rho(\cM)\leq 0.886$. 
In order to select the matrices $A_z$ in an optimal way let us remark that, since $A_1$ is an isometry and our upper bound is necessarily smaller than one, we can restrict to matrix products whose last multiplication is made with the matrix $A_2$. Also, $A_1^4=-\mathrm{Id}$, and thus we may consider powers of $A_1$ with maximum exponent $3$. Moreover $A_2A_1^2 A_2=A_1^2$, and thus  powers of $A_1$ with exponent $2$ could be effective only at the beginning of the sequence.
Consider the following matrices:
\begin{eqnarray*}
&A[1]=A_2,\quad A[2]=A_2A_1,\quad A[3]=A_2A_1^2,\quad A[4]=A_2A_1^3,\quad A[5]=A_2A_1A_2A_1^2,&\\
&A[6]=A_2A_1A_2A_1A_2,\quad A[7]=A_2A_1A_2A_1A_2A_1,\quad A[8]=A_2A_1A_2A_1^3,&\\
&A[9]=A_2A_1^3A_2A_1^2,\quad A[10]=A_2A_1A_2A_1A_2A_1^2,\quad A[11]=A_2A_1^3A_2A_1^3,&\\
 &A[12]=A_2A_1A_2A_1A_2A_1^3,\quad A[13]=A_2A_1A_2A_1A_2A_1A_2A_1^2.&
\end{eqnarray*}
If $\alpha\in S^1$ represents the angle between the point $z$ and the $x_1$ axis, we can partition the half circle $[0,\pi]$ into intervals with each of which we associate a matrix product
as in Figure~\ref{fig-urbano}.
Of course, the above estimate of $\tilde \rho(\cM)$ can most probably be tightened by considering longer matrix products.

As an alternative method, estimates of the Lyapunov functions in Section~\ref{sec-lyap} can also be computed numerically. Figure~\ref{plateau}(a) shows an estimate of the candidate Lyapunov function described in Remark~\ref{Vhat} taking $\lambda=0.88$. This estimate is obtained by computing an approximation of $\hat{V}_{\lambda}$ on a large number of points and by interpolating such values. One can see that the corresponding ratio $\min\{\hat{V}_{\lambda}(A_1x),\hat{V}_{\lambda}(A_2x)\}/\hat{V}_{\lambda}(x)$, which represents the largest decrease rate of $\hat{V}_{\lambda}$ along trajectories and is depicted in Figure~\ref{plateau}(b), slightly exceeds the value $0.88$  only on very  small intervals. Such a numerical analysis suggests that  $\tilde \rho(\cM)<0.88$. For smaller values of $\lambda$ the function $\hat{V}_{\lambda}$ appears to be much more irregular and the condition $\min\{\hat{V}_{\lambda}(A_1x),\hat{V}_{\lambda}(A_2x)\}/\hat{V}_{\lambda}(x)\leq \lambda$ is violated on larger intervals, so that there is no clear indication that $\tilde \rho(\cM)<\lambda$.
\begin{figure}
\centering
\includegraphics[width=0.5\textwidth]{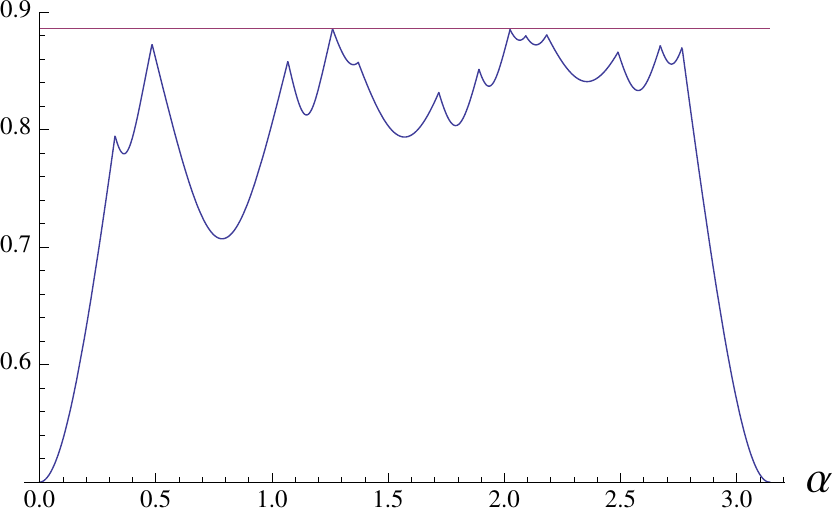}
\caption{The function $F(\alpha)=\min_i \big(|A[i]z_{\alpha}|^{1/{t_i}}\big)$, where $z_{\alpha}=(\cos\alpha,\sin\alpha)^T$ and $t_i$ is the length of the matrix product $A[i].$ Its maximum is an upper bound on the feedback stabilization radius. This maximum is approximately equal to $0.886$.}
\label{fig-urbano}
\end{figure}
\begin{figure}
\begin{center}
\subfigure[The level set $\hat{V}_{\lambda}^{-1}(1)$]{\includegraphics[width=.42\textwidth]{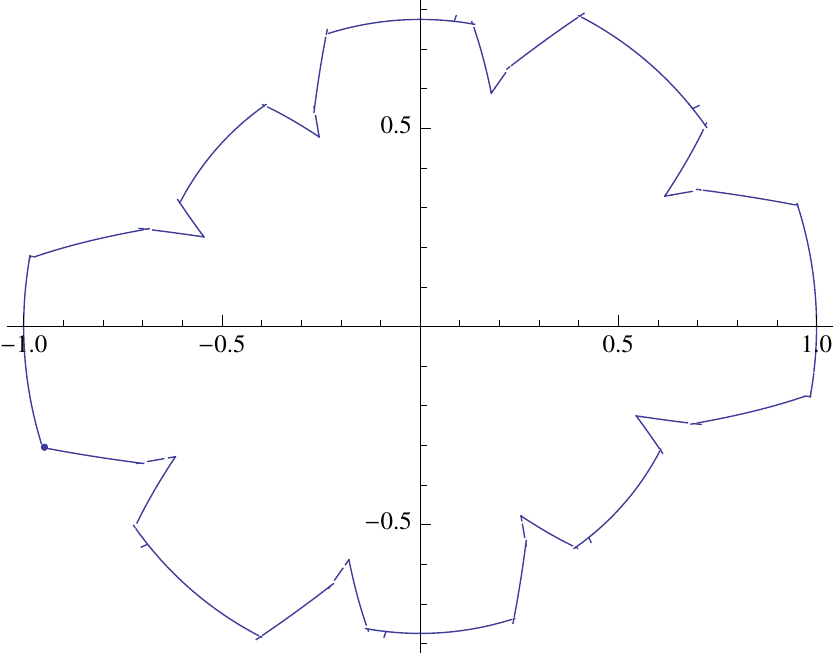}}\hspace{40pt}
\subfigure[Ratio $\min\{\hat{V}_{\lambda}(A_1x),\hat{V}_{\lambda}(A_2x)\}/\hat{V}_{\lambda}(x)$ for $x$ belonging to the first two quadrants.]{\includegraphics[width=.42\textwidth]{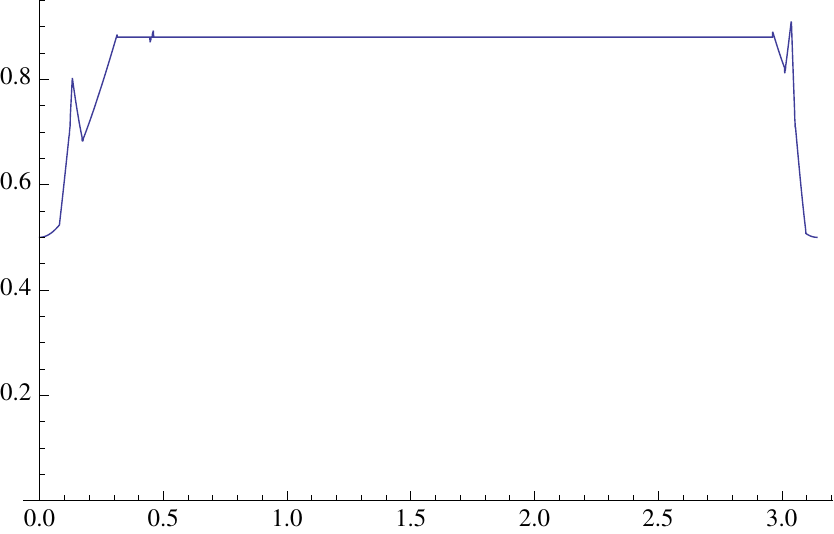}}
\end{center}
\caption{The function $\hat{V}_{\lambda}$ for $\lambda=0.88$.}
\label{plateau}
\end{figure}

\colp{In the following we will derive a result highlighting a technical obstruction to further improving the trivial lower bound $\tilde{\rho}(\cM)>\frac12$. In order to do so, we introduce the notion of forward orbit of a point $z_0$, defined as 
\[\{x_{\sigma,z_0}(t)\in\mathbb{R}^2\ |\ \sigma:\mathbb{N}\to \{1,2\},\ t\in\mathbb{N}\}.\]
For our particular matrices, it turns out that the forward orbit of $z_0$ 
actually coincides with the complete orbit of $z_0$, i.e. 
\[\{A_{\sigma(t)}^{\nu(t)}\dots A_{\sigma(0)}^{\nu(0)}z_0\ |\ \sigma:\mathbb{N}\to \{1,2\},\nu:\mathbb{N}\to \{-1,1\},\ t\in\mathbb{N}\}.\]
This is an immediate consequence of the fact that  $A_1^{-1}=A_1^7$ and $A_2^{-1}=A_1^2 A_2 A_1^{-2}=A_1^2 A_2 A_1^{6}$.}
Looking at the union of the orbits starting from points on the $x_1$ axis we can say something more, namely that it is symmetric with respect to the $x_1$ axis and coincides with the set of points from which there exists a trajectory reaching the $x_1$ axis in finite time:
\[\mathcal{R}=\left\{z\in \mathbb{R}^2\setminus\{0\}\ \Bigl|\ (1,0)^T=\frac{Az}{|Az|},\mbox{ for some } A\in \mathcal{M}^t,\ t\in\mathbb{N}\right\}\cup\{0\}.\]
\begin{proposition}
\label{prop-dense}
The set $\mathcal{R}$ is a countable union of straight lines passing through the origin. The corresponding angles with the $x_1$ axis form a strict subset of the set of angles having rational tangent and this subset is dense in $\re$. As a consequence $\mathcal{R}$ is dense in $\re^2$. 
\end{proposition}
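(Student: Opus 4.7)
The statement decomposes into four claims, which I would tackle in turn. First, since $\det A_1=\det A_2=1$, every $A\in\bigcup_t\cM^t$ is invertible, so the preimage $A^{-1}(\re_+ e_1)$ is a ray through the origin; $\mathcal{R}$ is the union of these countably many rays. Using $A_1^4=-\mathrm{Id}\in\cM^4$, the set $\mathcal{R}$ is symmetric under $z\mapsto -z$ (apply $A_1^4 A\in\cM^{t+4}$ to $-z$), so each ray extends to a full line through the origin. Identifying each such line with the tangent $u\in\re\cup\{\infty\}$ of its angle with the $x_1$-axis, the matrices $A_1$ and $A_2$ act projectively as the rational Möbius transformations $u\mapsto(u-1)/(u+1)$ and $u\mapsto 4u$. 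Because $A_1^{-1}=A_1^7$ and $A_2^{-1}=A_1^2 A_2 A_1^6$, the monoid $\bigcup_t\cM^t$ coincides with the group $G:=\langle A_1,A_2\rangle$, and the set of tangents realized in $\mathcal{R}$ is exactly the $G$-orbit of $0$. This orbit is countable and contained in $\cq\cup\{\infty\}$, which covers the first two claims.

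For strictness, I would reduce modulo $5$: there $A_2\equiv\mathrm{diag}(1,-1)\pmod 5$, and an explicit finite check shows that the induced $G$-action on $\mathbb P^1(\mathbb F_5)$ preserves the partition $\{0,\pm 1,\infty\}\sqcup\{2,3\}$. Hence the $G$-orbit of $0$ in $\mathbb P^1(\mathbb F_5)$ is $\{0,\pm 1,\infty\}$, and any reduced rational $p/q$ that reduces modulo $5$ to an element of $\{2,3\}$ cannot belong to the orbit; $u=2$ is such an example, establishing the strict inclusion.

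The density claim is the crucial one. I plan to exhibit a single element of $G$ acting as an elliptic Möbius transformation with irrational rotation angle. Set $B:=(A_2 A_1)^2$; a direct multiplication yields $B\in SL_2(\re)$ with $\mathrm{tr}\,B=9/8$, so $|\mathrm{tr}\,B|<2$ and $B$ is elliptic with rotation angle $\theta$ satisfying $\cos(\theta/2)=9/16$. Since $9/16\notin\{0,\pm 1/2,\pm 1\}$, Niven's theorem rules out $\theta/(2\pi)$ being rational, so in the disc model of the hyperbolic plane $B$ is conjugate to an irrational rotation around its fixed point, and its iterates act on the boundary circle $\partial\mathbb H^2=\re\cup\{\infty\}$ with dense orbits. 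Since $\{B^n(0):n\in\z\}\subset G\cdot 0$, the $G$-orbit of $0$ is already dense in $\re\cup\{\infty\}$, and $\mathcal{R}$ is therefore dense in $\re^2$.

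The main obstacle is this last density step: since $A_1$ alone rotates only by the rational angle $\pi/4$, the existence inside $G$ of an element whose rotation angle is an irrational multiple of $\pi$ is not immediate. Once the candidate $(A_2 A_1)^2$ is singled out, the trace computation, the appeal to Niven's theorem, and the transfer from an irrational circle rotation to dense orbits on the projective line are all elementary.
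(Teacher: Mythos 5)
Your proof is correct and shares the paper's overall architecture (lines with rational tangents via the projective action $u\mapsto\frac{u-1}{u+1}$, $u\mapsto 4u$; a congruence obstruction for strictness; an elliptic element of the group with rotation angle incommensurable with $\pi$ for density), but two steps are executed genuinely differently. For strictness, the paper tracks the residues modulo $4$ of the pair $\{p,q\}$ through the operations $\{p,q\}\mapsto\{p+q,|p-q|\}$ (halved when both entries are odd) and $\{p,q\}\mapsto\{4^kp,q\}$, concluding that no entry can ever be $2$ mod $4$ and hence that the tangent $\frac12$ is excluded; you instead reduce the projective action to $\mathbb{P}^1(\mathbb{Z}/5\mathbb{Z})$ and check that the induced subgroup of $PGL_2(\mathbb{Z}/5\mathbb{Z})$ preserves the partition $\{0,\pm1,\infty\}\sqcup\{2,3\}$, excluding the tangent $2$. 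Your version is arguably cleaner: since the relevant integer matrices have determinants $2$ and $4$, both units mod $5$, the reduction is automatically equivariant and one avoids the paper's bookkeeping about common factors of $2$ arising when $(p+q,p-q)$ is not primitive. For density both arguments rest on the same elliptic element $A_2A_1$; you square it to obtain the rational trace $\frac98$, hence $\cos=\frac9{16}\notin\{0,\pm\frac12,\pm1\}$, and Niven's theorem applies. Your arithmetic here is in fact the correct one: the rotation matrix $R$ displayed in the paper is not orthogonal and its trace does not match that of $A_2A_1$ (the diagonal entries should read $\frac{5}{4\sqrt2}$, giving $\cos2\theta=\frac9{16}$ rather than $\frac14$); this slip does not affect the paper's conclusion, since both values are rational and lie outside Niven's exceptional set, but your write-up gets the numbers right.
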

\begin{proof}
The set $\mathcal{R}$ is made of a countable union of straight lines passing through the origin since it contains all those for which the tangent of the angle with the $x_1$ axis is $4^k,\ k\in\mathbb{Z}.$
Also, the tangent of the angle formed by each line in $\mathcal{R}$ with the $x_1$ axis is a rational number. Indeed the maps $A_1$ and $A_2$ act on the angles in such a way that a rational tangent $\frac{p}q$ becomes $\frac{4p}q$ and $\frac{p-q}{p+q}$, respectively, and the set $\mathcal{R}$ can be identified with the (integer) couples $\{p,q\}$ obtained by applying iteratively these two operations starting from $\{p,q\}=\{0,1\}$. We can also assume that at each iteration the numbers $p,q$ are positive (the orbit is symmetric with respect to the $x_1$ axis) and relatively prime. In particular, applying $A_1$ or $A_1^{-1}$ leads, up to the sign, to the couple $\{\frac{p+q}2,\frac{|p-q|}2\}$, when both $p,q$ are odd, while it leads to $\{p+q,|p-q|\}$ if one among $p,q$ is even. On the other hand, applying $k$ times $A_2$ or $A_2^{-1}$ leads to couples of the form $\{4^kp,q\}$ or $\{4^kq,p\}$, up to a common divisor. 

We claim that all the couples $\{p,q\}$ that can be generated with these operations starting from the $x_1$ axis are such that both $p,q$ are different from $2$ in $\mathbb{Z}/4\mathbb{Z}$.\\
Since applying $A_2$ or $A_2^{-1}$ to the $x_1$ axis keeps it invariant, we start by applying $A_1$ or $A_1^{-1}$, and we get the couple $\{1,1\}$.
We now apply $A_2^k$ or $A_2^{-k}$ and we get a couple which, in $\mathbb{Z}/4\mathbb{Z}$, is equal to $\{0,1\}$. We apply again $A_1$ or $A_1^{-1}$ and we get a couple of the form $\{\pm 1,\pm 1\}$ in $\mathbb{Z}/4\mathbb{Z}$. A further application of $A_2^k$ or $A_2^{-k}$ leads to $\{0,\pm 1\}$ in $\mathbb{Z}/4\mathbb{Z}.$ In particular we get that all the  couples that can be obtained by iterating this procedure must be of the form $\{0,\pm 1\}$ or $\{\pm 1,\pm 1\}$ in $\mathbb{Z}/4\mathbb{Z}$, proving the claim.

We deduce that $\mathcal{R}$ cannot be identified with the set of all angles having rational tangents. For instance it does not include the angle of tangent  $\frac12$, as well as all the elements of the corresponding orbit.

Let us now show that $\mathcal{R}$ corresponds to a set of angles which is dense in  $[0,2\pi]$. For this purpose, let us consider the matrix product $A_2 A_1$. One can check that this matrix has complex non real eigenvalues and that it is similar to the rotation
\[
R=\left(
\begin{array}{cc}
\frac{\sqrt{5}}{2\sqrt{2}} & -\frac{\sqrt{7}}{2\sqrt{2}} \\
\frac{\sqrt{7}}{2\sqrt{2}} & \frac{\sqrt{5}}{2\sqrt{2}} 
\end{array}
\right).
\]
The angle $\theta$ of the rotation is such that $\cos2\theta=\frac{1}{4}$, from which one easily deduces that this angle is incommensurable with $\pi$ (see e.g.~\cite{jahnel}). We thus deduce that the set of points obtained applying iteratively $A_2 A_1$ and starting from the $x_1$ axis correspond to a set of angles which is dense in $[0,2\pi]$.
This concludes the proof of the proposition.
\end{proof}
Note that the property that the orbit corresponds to a set of angles dense in $[0,2\pi]$ remains true  if one replaces a point of the $x_1$ axis with any other starting point.

Since the value $\tilde\rho_z(\cM)$ is constant on each orbit, and thus on $\mathcal{R}$, we have the following important consequence of Proposition~\ref{prop-dense}.
\begin{corollary}
There exists a dense subset $\Omega$ of $\re^2$, with $\mathcal{R}\subseteq \Omega$, such that $\tilde\rho_z(\cM)=\frac12$ for each $z\in \Omega$.
\end{corollary}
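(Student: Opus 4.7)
The plan is to take $\Omega = \mathcal{R}$ and verify two ingredients: (a) $\tilde{\rho}_z(\cM) = \frac12$ for every $z$ on the $x_1$ axis, and (b) $\tilde{\rho}_z(\cM)$ is constant along each orbit, so in particular on $\mathcal{R}$, which by construction is the union of the orbits of the points of the $x_1$ axis. Density of $\Omega$ will then be immediate from Proposition~\ref{prop-dense}.

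For (a) I would obtain the upper bound from the constant switching signal $\sigma\equiv 2$: for $z=(z_1,0)^T$ one has $A_2^t z=(2^{-t}z_1,0)^T$, hence $|x_{\sigma,z}(t)|=2^{-t}|z|$ and $\tilde{\rho}_z(\cM)\leq\frac12$. The matching lower bound comes from Theorem~\ref{lem-sing} together with the remark immediately following it: its proof actually shows that $\tilde{\rho}_x(\cM)\geq \min_{A\in\cM}\sigma_m(A)$ for every $x\in\mathbb{R}^d\setminus\{0\}$, and here this minimum equals $\min(1,\frac12)=\frac12$, since $A_1$ is orthogonal and $A_2=\mathrm{diag}(\frac12,2)$.

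For (b), if $z'=Az$ with $A\in\cM^s$, then any trajectory from $z'$ realizing a rate $\lambda$ in the sense of Definition~\ref{ref-def} extends to one from $z$ by prepending the length-$s$ prefix corresponding to $A$; this preserves the rate $\lambda$ up to a multiplicative constant depending on $\|A\|$ and $\lambda^{-s}$, hence $\tilde{\rho}_z(\cM)\leq\tilde{\rho}_{z'}(\cM)$. I would then invoke the observation recalled in the paragraph preceding Proposition~\ref{prop-dense}, namely that in this particular system the forward orbit coincides with the complete orbit (since $A_1^{-1}=A_1^7$ and $A_2^{-1}=A_1^2 A_2 A_1^6$ are both forward products in $\cM$), so that points on a common orbit are mutually forward-reachable and the reverse inequality follows by symmetry.

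The only mildly delicate step is the bookkeeping of constants in (b); apart from that, the proof just recombines results already established in the section, and the conclusion follows from the density of $\mathcal{R}$ given by Proposition~\ref{prop-dense}.
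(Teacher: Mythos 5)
Your proof is correct and takes essentially the same route as the paper, which justifies the corollary in one line by noting that $\tilde\rho_z(\cM)$ is constant on each orbit, equals $\frac12$ on the $x_1$ axis (upper bound from $\sigma\equiv 2$, lower bound from the pointwise form of Theorem~\ref{lem-sing} noted in the remark following it), and that $\mathcal{R}$ is dense by Proposition~\ref{prop-dense}. Your write-up merely spells out the orbit-invariance and reachability bookkeeping that the paper leaves implicit.
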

At the light of the above result, the following question is very natural:
\begin{center}
\textit{Is there a point $z\in \mathbb{R}^2$ (and thus a dense subset of $\mathbb{R}^2$) such that  $\tilde\rho_z(\cM)>\frac12$?}
\end{center}
A positive answer to this question would reveal interesting discontinuity properties of the function $z\mapsto\tilde\rho_z(\cM)$.
However, due to the complexity of the set of trajectories starting from a given point $z$, giving an estimate of the value $\tilde\rho_z(\cM)$ appears to be very difficult. 
Summing up, for the  considered example the issue of  approximating with arbitrary precision the value $\tilde\rho(\cM)$ by simple numerical methods appears to be a very hard, if not intractable, task. It seems that any accurate computation of the stabilization radius, even for very simple examples, should likely rely on \textit{ad hoc} arguments.

\section{The continuous-time case}
\label{s-cont}

Several notions and results in Sections~\ref{sec-properties} and~\ref{sec-control} are still valid in the continuous-time case without need of substantial changes \colp{(recalling that, according to Assumption~\ref{assum}, in this case $\cM$ is a compact and convex set of matrices)}.
In particular the notion of pointwise stabilizabity given in Definition~\ref{def-pointwise} is still meaningful. Similarly, the stabilizability indices introduced in Definition~\ref{ref-def} are still well-defined, and the continuous-time counterparts of Proposition~\ref{equiv} and Proposition~\ref{p-optim} hold true. A partial counterpart of Lemma~\ref{lem-basic} also holds, as it is straightforward to show that $\tilde \rho (\cM+\gamma \mathrm{Id}) = e^\gamma \tilde \rho (\cM)$, where $\cM+\gamma \mathrm{Id}$ is the set of matrices of the form $M+\gamma \mathrm{Id}$ with $M\in\cM$.

On the other hand, in the continuous-time case one has to be careful in the definition of feedback stabilizability in order to avoid existence and uniqueness issues for ordinary differential equations with possibly discontinuous right-hand side. In particular, several non-equivalent and meaningful notions of solutions exist in the case of a discontinuous feedback $\sigma(\cdot)$. To  circumvent this problem, we choose here to deal with an adapted notion of (uniform) feedback stabilizability based on  a ``sample-and-hold'' scheme.  
\colp{Note that the use of this kind of schemes is consistent with the sampling process used in computer control. Moreover the use of other notions of solutions, such as Filippov solutions, appears to be less practical and may prevent to obtain fully general stabilization results (see e.g.~\cite{clarke}). Even so, we think that the study of the feedback stabilization properties for linear switched systems, in relation with other notions of solutions,  is worth further investigation.}


\begin{definition}
\label{def-sample}
For
 $\delta>0$ we say that a $0$-homogeneous function $\sigma:\mathbb{R}^d\to \cM$ is a $\delta$-stabilizing feedback
if for any $x_0\in \mathbb{R}^d$, the trajectory starting from $x_0$ and defined by $\dot{x}(t)=A_{\sigma(x(\delta k))}x(t)$ on each interval $[\delta k, \delta (k+1)),\ k\in\mathbb{N},$  converges to $0$.

In addition we say that $\mathcal{M}$ is uniformly feedback stabilizable if for any positive $\epsilon_1,\epsilon_2$ there exists $T\in \re_+$  such that for any $\delta>0$ small enough there exists a $\delta$-stabilizing feedback such that if $|x_0|<\epsilon_1,$ the 
trajectory $x(\cdot)$  starting from $x_0$ satisfies $|x(t)|<\epsilon_2$ if $t\geq T$. 
\end{definition}
Note that, for the stabilization of continuous-time nonlinear control systems, the use of sample-and-hold feedback controls in combination with  control-Lyapunov functions 
is not new. It has been investigated for instance in~\cite{clarke}. In that paper the asymptotic controllability property for nonlinear control systems is proved under the assumption that there exists a continuous control-Lyapunov function, and allowing more general sampling times  compared to our Definition~\ref{def-sample} (namely, the differences among subsequent sampling times are small enough but not necessarily equal). The following result is the continuous-time counterpart of Proposition~\ref{Lyap}. Here the notion of control-Lyapunov function is that of Definition~\ref{Lyap-def} with the exception that $t\in \mathbb{R}_+$ in Item~$(2)$.
\begin{proposition}\label{Lyap-cont}
If $\mathcal{M}$ admits a Lipschitz continuous control-Lyapunov function $V$ then $\mathcal{M}$ is uniformly feedback stabilizable, and, for each $\delta>0$ small enough, the $\delta$-stabilizing feedback $\sigma(\cdot)$ can be taken piecewise constant. 
\end{proposition}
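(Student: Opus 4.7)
The plan is to adapt the discrete-time argument of Proposition~\ref{Lyap}. In the discrete case one could, at each state $x$, select an admissible one-step matrix already producing a decrease of $V$; in continuous time one cannot in general apply a single matrix on $[0,\delta]$, so the key new ingredient is to ``compile'' the open-loop stabilizing signal furnished by the CLF into a single constant matrix, exploiting the convexity of $\cM$ guaranteed by Assumption~\ref{assum}. Once such an $\bar A_x\in\cM$ is obtained for every $x$ on the unit sphere, the continuity/compactness/homogeneity machinery of the discrete proof transfers.

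Fix $\delta>0$ (to be chosen small) and $x$ with $|x|=1$. The continuous-time version of Definition~\ref{Lyap-def}(2) provides a measurable $\sigma_x:[0,\delta]\to\cM$ with $V(x_{\sigma_x,x}(\delta))\leq \mu^\delta V(x)$. Define
\[\bar A_x=\frac{1}{\delta}\int_0^\delta A_{\sigma_x(s)}\,ds,\]
which lies in $\cM$ by convexity and compactness. A Duhamel/Gronwall computation then yields
\[|x_{\sigma_x,x}(\delta)-e^{\delta\bar A_x}x|\leq K\delta^2,\]
where $K$ depends only on $\max_{A\in\cM}\|A\|$: subtracting the two differential equations and using the zero-mean identity $\int_0^\delta(A_{\sigma_x(s)}-\bar A_x)\,ds=0$ eliminates the leading linear contribution, and the remaining integrand is $O(\delta)$ uniformly in $\sigma_x$ and in $x$ on the unit sphere.

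From here the argument mirrors the discrete-time proof. Lipschitz continuity of $V$ gives
\[V(e^{\delta\bar A_x}x)\leq V(x_{\sigma_x,x}(\delta))+L_V K\delta^2\leq \mu^\delta V(x)+L_V K\delta^2.\]
Since $V(x)\geq m$ for $|x|=1$ and $\mu^\delta\leq 1-c_0\delta$ with $c_0=|\ln\mu|/2$ for $\delta$ small enough, this yields $V(e^{\delta\bar A_x}x)\leq (1-c\delta)V(x)$ for some $c>0$ independent of $x$. The continuity of $(x,A)\mapsto V(e^{\delta A}x)/V(x)$, combined with compactness of the unit sphere, then provides finitely many points $x_1,\dots,x_N$ and open cones $\cK_1,\dots,\cK_N$ covering $\re^d\setminus\{0\}$ such that $\bar A_{x_i}$ realizes a decrease of at least a factor $(1-c\delta/2)$ on $\cK_i$; setting $\sigma(y)=\bar A_{x_i}$ for $y\in\cK_i$ (with any fixed tie-breaking on overlaps) defines a $0$-homogeneous, piecewise constant feedback.

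Plugging $\sigma$ into the sample-and-hold scheme of Definition~\ref{def-sample} gives $V(x((k+1)\delta))\leq (1-c\delta/2)V(x(k\delta))$ at each sampling instant, hence geometric decay of $V$; on each interval $[k\delta,(k+1)\delta]$ the value of $V$ stays within a bounded multiplicative factor of $V(x(k\delta))$, so the equivalence $m|x|\leq V(x)\leq M|x|$ gives uniform exponential stabilization with rate independent of the initial condition, which yields in particular uniform feedback stabilizability as defined in Definition~\ref{def-sample}. The main technical obstacle is precisely controlling the error $|x_{\sigma_x,x}(\delta)-e^{\delta\bar A_x}x|$ uniformly in the (only measurable) signal $\sigma_x$ and in $x$ on the unit sphere; this is where convexity and compactness of $\cM$, combined with the Duhamel expansion, do the essential work, while the remaining reasoning transfers almost verbatim from the discrete-time case.
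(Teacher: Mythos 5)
Your proposal is correct and follows essentially the same route as the paper's proof: averaging the open-loop signal over $[0,\delta]$ into a single matrix $\bar A_x\in\cM$ via convexity, establishing the $O(\delta^2)$ error bound through the variation-of-constants formula with the zero-mean cancellation, and then using the Lipschitz continuity of $V$ together with a compactness/cone-covering argument to obtain a piecewise constant $\delta$-stabilizing feedback. No substantive differences to report.
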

The previous result may not be seen as a particular case of the main theorem in~\cite{clarke}, as the latter does not guarantee any regularity property of the feedback law.  Also, opposed to~\cite{clarke}, our result can be proved without resorting to sophisticated tools from nonsmooth analysis, as shown below.
\begin{proof}[Proof of Proposition~\ref{Lyap-cont}]
Given an initial point $x_0\in\mathbb{R}^d$ let us consider the switching law $\sigma_{x_0}(\cdot)$ as given by Definition~\ref{Lyap-def}. 
For $\delta>0$ let us consider the matrix $\bar{A}=\frac1\delta \int_0^{\delta} A_{\sigma_{x_0}(s)}ds$. Since $\mathcal{M}$ is convex 
we have that $\bar{A}\in \mathcal{M}$, that is there exists $\bar\sigma(x_0)$ such that $A_{\bar\sigma(x_0)}=\bar{A}$. Consider now the solution $\bar x(\cdot)$ of the equation $\dot x=\bar A x$ with initial condition $x_0$ and set $y(\cdot)=x_{\sigma_{x_0},x_0}(\cdot)-\bar x(\cdot)$. Then $y(\cdot)$ satisfies the equation 
\[\dot y(t)=\bar A y(t)+(A_{\sigma(t)}-\bar A) x_{\sigma_{x_0},x_0}(t)\]
with initial condition $y(0)=0$. The corresponding solution at time $\delta$, given by the variation of constants formula, is
\[y(\delta)=e^{\delta\bar A} \int_0^{\delta} e^{-t\bar A}(A_{\sigma(t)}-\bar A) x_{\sigma_{x_0},x_0}(t)dt.\]
Since 
\begin{align*}
e^{-t\bar A}(A_{\sigma(t)}-\bar A) x_{\sigma_{x_0},x_0}(t)=&(e^{-t\bar A}-\mathrm{Id})(A_{\sigma(t)}-\bar A) x_{\sigma_{x_0},x_0}(t)+\\
&+(A_{\sigma(t)}-\bar A) (x_{\sigma_{x_0},x_0}(t)-x_0)+(A_{\sigma(t)}-\bar A)x_0
\end{align*}
and by compacity of $\mathcal{M}$, and since moreover the integral on $[0,\delta]$ of the last term is zero,  we have that (for $\delta$ small) there exists a constant $C>0$ such that 
\[|y(\delta)|\leq C\delta^2|x_0|.\]
Let $L$ be the Lipschitz constant of the Lyapunov function $V$.
Then 
\begin{align*}
V(x_0)-V(\bar x(\delta)) &\geq  V(x_0)-V(x_{\sigma_{x_0},x_0}(\delta))-|V(\bar x(\delta))-V(x_{\sigma_{x_0},x_0}(\delta))|\\
&\geq  m(1-\mu^{\delta})|x_0|- C\delta^2|x_0|\\
&\geq  \frac12 m\Big(\log\frac1{\mu}\Big)\delta |x_0| 
\end{align*}
for 
$\delta$ small enough, which immediately shows that $\bar \sigma(\cdot)$ is a $\delta$-stabilizing feedback.

Finally, exactly as in the discrete-time case, the fact that the $\delta$-stabilizing feedback can be taken piecewise constant follows from the continuity of $V$ and the continuous dependence of the solutions with respect to the initial data.
\end{proof}
\begin{remark}
The feedback constructed in the proof of Proposition~\ref{Lyap-cont} actually stabilizes the system robustly with respect to small perturbations of the sampling times. 
\end{remark}
Proposition~\ref{Vlam} and Corollary~\ref{cor-feedback-pointwise} still hold in the continuous-time case (in particular a proof of the continuous-time counterpart of Proposition~\ref{Vlam} may be adapted from an analogous result in \cite[Section 4.3.1]{sun}) and they are summarized by the following result. Note that converse Lyapunov theorems for the stabilization (or asymptotic controllability) of more general classes of nonlinear control systems may be found for instance in~\cite{kellett-teel,rifford}.
\begin{proposition}
\label{Vlam-cont}
For any $\lambda>\tilde{\rho}(\cM)$ the function $V_{\lambda}:\mathbb{R}^d\to \mathbb{R}_+$ 
\begin{equation}
\label{lyap}
V_{\lambda}(x)=\sup_{t\geq 0}\inf_{\sigma(\cdot)}\frac{|x_{\sigma,x}(t)|}{\lambda^t}
\end{equation}
is well-defined, absolutely homogeneous, Lipschitz continuous and satisfies $V_{\lambda}(x_{\sigma,x}(t))\leq\lambda^t V_{\lambda}(x)$ for any $x\in\mathbb{R}^d,\ t\in\mathbb{R}_+$ and some $\sigma(\cdot)$, depending on $x$. In particular $\mathcal{M}$ is pointwise stabilizable if and only if it admits an absolutely homogeneous control-Lyapunov function. As a consequence the following conditions are equivalent:
\begin{itemize}
\item[(i)] $\tilde{\rho}'(\cM)<1,$
\item[(ii)] $\tilde{\rho}(\cM)<1,$
\item[(iii)] $\mathcal{M}$ is pointwise stabilizable,
\item[(iv)] $\mathcal{M}$ is uniformly feedback stabilizable.
\end{itemize}
\end{proposition}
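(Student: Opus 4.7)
The plan is to adapt the discrete-time argument behind Proposition~\ref{Vlam} to the continuous-time setting, using Gronwall-type estimates in place of submultiplicativity of matrix products, and then to collect the equivalences (i)--(iv) by combining the four claims on $V_\lambda$ with Proposition~\ref{Lyap-cont} and the continuous-time counterpart of Proposition~\ref{equiv}.

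First I would establish the bounds $|x|\leq V_\lambda(x)\leq M|x|$. The lower bound is attained trivially at $t=0$. For the upper bound, I would pick $\lambda'\in(\tilde\rho(\cM),\lambda)$ and use the continuous-time equivalence $\tilde\rho(\cM)=\tilde\rho'(\cM)$ to obtain a uniform constant $M'>0$ such that for every $x$ some switching signal $\sigma_x$ satisfies $|x_{\sigma_x,x}(t)|\leq M'(\lambda')^t|x|$ for all $t\geq 0$. Hence $\inf_\sigma|x_{\sigma,x}(t)|/\lambda^t\leq M'(\lambda'/\lambda)^t|x|\leq \max(M',1)\,|x|$ uniformly in $t$, so that $V_\lambda$ is well-defined and grows at most linearly in $|x|$; absolute homogeneity is immediate from the linearity of the dynamics in $x$.

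The crucial observation for Lipschitz continuity is that the previous estimate shows $\inf_\sigma|x_{\sigma,x}(t)|/\lambda^t\leq|x|$ as soon as $t\geq T_0:=\max\bigl(0,\log M'/\log(\lambda/\lambda')\bigr)$, a threshold independent of $x$. Since $V_\lambda(x)\geq|x|$ is already attained at $t=0$, the supremum defining $V_\lambda(x)$ may be restricted to the compact interval $[0,T_0]$. On $[0,T_0]$ the Gronwall bound $|x_{\sigma,x}(t)-x_{\sigma,y}(t)|\leq e^{Kt}|x-y|$, where $K=\sup_{A\in\cM}\|A\|$, provides a uniform Lipschitz constant for the family $\{x\mapsto\inf_\sigma|x_{\sigma,x}(t)|/\lambda^t\}_{t\in[0,T_0]}$, and Lipschitz continuity of $V_\lambda$ follows by taking the supremum. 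The decay $V_\lambda(x_{\sigma,x}(t))\leq\lambda^t V_\lambda(x)$ along a suitable trajectory would then be obtained by an approximation-plus-compactness argument in the spirit of the proof of Proposition~\ref{p-optim}: for each $\varepsilon>0$, build a switching signal nearly realizing the infimum in~\eqref{lyap} up to $\varepsilon$ at a well-chosen time, decompose the resulting trajectory at time $t$, and extract a subsequential limit using Arzel\`a--Ascoli (again enabled by the Gronwall bound) together with the convexity and compactness of $\cM$ to ensure that the limit signal is admissible.

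Once these four properties of $V_\lambda$ are established, the equivalences follow: (i)$\Leftrightarrow$(ii)$\Leftrightarrow$(iii) is the continuous-time version of Proposition~\ref{equiv}, (iv)$\Rightarrow$(iii) is trivial, and (iii)$\Rightarrow$(iv) is obtained by choosing $\lambda\in(\tilde\rho(\cM),1)$ and applying Proposition~\ref{Lyap-cont} to the absolutely homogeneous Lipschitz control-Lyapunov function $V_\lambda$. The main obstacle I anticipate is the Bellman-type decay along a \emph{single} trajectory: since $\sup_t$ and $\inf_\sigma$ in~\eqref{lyap} cannot be exchanged in a straightforward way (the set of switching signals is not convex in any obviously useful topology, so no direct minimax theorem applies), one cannot rely on a plain dynamic-programming identity and must instead construct the stabilizing $\sigma$ through the limit procedure sketched above, taking care that convergence of the switching signals is compatible with continuous-time dynamics.
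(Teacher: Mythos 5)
The paper itself does not prove this proposition: it only remarks that the continuous-time analogue of Proposition~\ref{Vlam} ``may be adapted from~[Sun--Ge, Section~4.3.1]'' and then assembles the equivalences from Proposition~\ref{Lyap-cont} and the continuous-time version of Proposition~\ref{equiv}, exactly as you do in your final paragraph. Measured against that, most of your proposal is sound and more explicit than the paper: the two-sided bound $|x|\le V_\lambda(x)\le \max(M',1)|x|$ via $\tilde\rho(\cM)=\tilde\rho'(\cM)$, absolute homogeneity, and especially the observation that the supremum in \eqref{lyap} may be restricted to a compact interval $[0,T_0]$ independent of $x$, which together with the Gronwall bound $\|\Phi_\sigma(t)\|\le e^{Kt}$ yields a uniform Lipschitz constant --- this is a complete and correct argument for those three claims, and the deduction of (i)--(iv) at the end is exactly the paper's logic.

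The genuine gap is the decay inequality $V_\lambda(x_{\sigma,x}(t))\le\lambda^t V_\lambda(x)$, which you correctly identify as the hard step but do not actually close. The limit procedure you sketch --- take a signal nearly realizing $\inf_\sigma|x_{\sigma,x}(t^*)|/\lambda^{t^*}$ at a well-chosen horizon $t^*$ and extract a subsequential limit --- controls only \emph{one} term of the supremum defining $V_\lambda$ at the intermediate point $x_{\sigma,x}(s)$, namely the term at horizon $t^*-s$; but $V_\lambda(x_{\sigma,x}(s))$ is a supremum over \emph{all} horizons, and the near-minimizers associated with different horizons need not agree on $[0,s]$. This is precisely the $\inf\sup\ge\sup\inf$ obstruction you yourself point out, and passing to an Arzel\`a--Ascoli limit of near-minimizers for a single horizon does not remove it. What is actually needed is (a) the dynamic-programming inequality $\inf_\sigma V_\lambda(x_{\sigma,x}(s))\le\lambda^s V_\lambda(x)$ for each fixed $s$, which for the sup-inf function $V_\lambda$ does not follow from a formal exchange of $\sup$ and $\inf$ and requires a genuine argument (this is the content the paper outsources to Sun--Ge); (b) attainment of that infimum, where the convexity and compactness of $\cM$ enter through weak-$*$ compactness of measurable signals with values in $\cM$ and continuity of $\sigma\mapsto x_{\sigma,x}(s)$ in that topology; and (c) a concatenation/diagonal argument producing a single signal valid for all $t\ge0$ simultaneously. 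Your proposal gestures at (b) and (c) but contains no argument for (a), which is where the real difficulty lies.
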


\section{Conclusion and open questions}
Stabilizability of switched systems is a natural goal for control theorists, with several promising applications.  The problem raises many subtle mathematical questions. In this paper, we tried to motivate a thorough analysis of these questions.  We showed that many counterintuitive phenomena occur, which one must pay attention to for a proper treatment of the problem.  
\rmj{From a numerical point of view, the stabilization problem appears to be extremely challenging. The discussion and the results obtained throughout the paper raise several open question. Among them we wish to recall the next two ones:}
\begin{opq}
Is there an algorithm, or a characterization, to decide whether $\tilde \rho(\cM) =\check \rho(\cM)?$ 
\end{opq}
\begin{opq}Is it possible to improve Theorem~\ref{lem-sing} and provide a generally better formula for a lower bound? In particular, how to find a better lower bound on $\tilde\rho(\cM)$ in Example \ref{ex-urbano}?
\end{opq}
\rmj{As for the first question, let us mention that sufficient conditions for this equivalence, for a slightly different notion of feedback stabilizability (stronger than ours, motivated by applications involving supervisory control and measurement scheduling), has been shown in~\cite{lee-dul2011}.} Concerning the second question, let us notice that a result improving Theorem~\ref{lem-sing} could for instance lead to a better estimate (from below) of $\tilde\rho(\cM)$ in Example \ref{ex-urbano}. Nevertheless the analysis in Section~\ref{stanford} seems to suggest that an exact computation of such a quantity  should rely on \textit{ad hoc} methods.


\bibliographystyle{plain}

\end{document}